\documentclass[11pt]{amsart}

\usepackage{amsmath,amsfonts,eucal}
\newtheorem{thm}{Theorem}[section]
\newtheorem{prop}[thm]{Proposition}
\newtheorem{cor}[thm]{Corollary}
\newtheorem{lem}[thm]{Lemma}

\usepackage[top=1.4in, bottom=1.4in, left=1.7in, right=1.7in]{geometry}

\linespread{1.07}
\parskip2pt

\begin{document}

\def\C{{\mathbb C}}
\def\Z{{Z}}
\def\W{W}
\def\D{{\mathcal D}}
\def\G{{\mathcal G}}
\def\F{{\mathcal F}}
\def\H{{h}}
\def\X{{\mathcal X}}
\def\U{{u}}
\def\S{{a}}

\def\x{{\mathbf x}}
\def\y{{\mathbf y}}
\def\z{{\mathbf z}}
\def\ze{{\mathbf w}}

\def\1{{\mathbf 1}}

\def\bZ{{\mathbb Z}}
\def\N{{\mathbb N}}
\def\R{{\mathbb R}}
\def\E{{\mathbb E}}

\def\sgn{\mbox{sgn}}

\def\l{\lambda}
\def\w{\diamond}

\title[A multi-layer extension of the stochastic heat equation]{A multi-layer extension of the\\ stochastic heat equation}

 \author{Neil O'Connell}
 \address{Mathematics Institute, University of Warwick, Coventry CV4 7AL, UK}
\curraddr{}
\email{n.m.o-connell@warwick.ac.uk}
\thanks{}
 \author{Jon Warren}
 \address{Department of Statistics, University of Warwick, Coventry CV4 7AL, UK}
\curraddr{}
\email{j.warren@warwick.ac.uk}
\thanks{}

\subjclass[2010]{Primary 60H15, 15A52}

\keywords{}

\date{}

\dedicatory{}

\maketitle

\begin{abstract}
Motivated by recent developments on solvable directed polymer models,
we define a `multi-layer' extension of the stochastic heat equation involving
non-intersecting Brownian motions. 
\end{abstract}

\section{Introduction and summary}

We consider the stochastic heat equation in one dimension
\begin{equation}\label{she1}
d u=\frac12 \partial_y^2 u \, dt + u\, dW(t) ,
\end{equation}
with initial condition $u(0,x,y)=\delta(x-y)$, where $W$ is a cylindrical Brownian 
motion on $L^2(\R)$ and the term $u\, dW(t)$ is interpreted as an It\^o integral.
Formally, we can write this as
\begin{equation}\label{she2}
\partial_t u=\frac12 \partial_y^2 u +  u  \dot{W} ,
\end{equation}
where $\dot{W}$ denotes space-time white noise associated with  the Brownian motion $W$. 
This is a distribution-valued
Gaussian field on $[0,\infty)\times\R$ with covariance function
$$\E\,\dot{W}(t,y)\dot{W}(t',y')=\delta(y-y')\delta(t-t').$$
For more background see, for example,~\cite{bc,houz,q,walsh}.

We will write stochastic integrals $\int_0^t H(s) dW(s)$ of a predictable $L^2(\R)$-valued process $H$ as
\[
\int_0^t H(s,x) W(ds,dx).
\]
Then the solution $u(t,x,y)$ to \eqref{she1} is given by the chaos expansion 
\begin{align}\label{chaos}
u(t,x,y)=p(t,x,y)+& \nonumber \\
\sum_{k=1}^\infty \int_{\Delta_k(t)} \int_{\R^k}
p(t_1,x,x_1) & p(t_2-t_1,x_1,x_2)\cdots p(t-t_k,x_k,y)\\
&\times \W(dt_1,dx_1)\cdots \W(dt_k,dx_k),\nonumber
\end{align} 
where $\Delta_k(t)=\{0<t_1<\cdots<t_k<t\}$ and
$$p(t,x,y)=\frac1{\sqrt{2\pi t}} e^{-(x-y)^2/2t}.$$ 
For each $t>0$ and $x,y\in\R$, the expansion (\ref{chaos}) is convergent in $L^2(W)$.
It satisfies (\ref{she1}) in the sense that it satisfies the integral equation
\begin{equation}\label{ie-she1}
u(t,x,y)=p(t,x,y)+  \int_0^t \int_\R p(t-s,y',y) u(s,x,y') \W(ds, dy' ).
\end{equation} 
For more details see, for example,~\cite{q}.  The chaos series representation for the solution can 
be viewed as an expansion of the Feynman-Kac formula.  To emphasize this interpretation,
the solution (\ref{chaos}) is often written as a generalised Wiener functional
\begin{equation}\label{gwf}
u(t,x,y) =  p(t,x,y) \E \exp^\diamond \left( \int_0^t dW(s,X_s) \right) ,
\end {equation}
 where the expectation is with respect to a Brownian bridge $(X_s,\ s\le t)$ which starts at $x$ at time $0$ and ends at $y$ at time $t$~\cite{houz}.  The factor $p(t,x,y)$ is arises because of this use of the bridge in the representation, corresponding to the $\delta$-function initial condition,   rather than the more usual free Brownian motion. It is important to note that the integral 
 $\int_0^t dW(s,X_s) ds$, representing the integral of the white noise $\dot{W}$ along the path $s \mapsto (s, X_s)$, is not well-defined, and equation \eqref{gwf} is purely formal. 

This solution arises as a scaling limit of partition functions associated with
lattice directed polymers, in the `intermediate disorder' regime~\cite{akq,mq}.  
In fact, as suggested by (\ref{gwf}), it can be interpreted directly as a partition function for 
the continuum random polymer~\cite{akq1}.  
On the other hand, it has been known for some time that $h=\log u$ arises as the scaling
limit of the height profile of the weakly asymmetric simple exclusion process, at least for
equilibrium initial conditions~\cite{bg}, recently extended to include the present initial 
condition in \cite{acq}.
With this `surface growth' interpretation, $h$ is understood to be the
physically relevant solution (also known as the Cole-Hopf solution) to the 
KPZ (Kardar-Parisi-Zhang) equation~\cite{kpz}
$$\partial_t h = \frac12  \partial_y^2 h +\frac12  (\partial_y h)^2 + \dot{W}(t,y),$$
with `narrow wedge' initial condition.

In a remarkable recent development
the exact distribution of the random variable $u(t,x,y)$ has been determined. 
This has been acheived via two distinct approaches.  One of these~\cite{acq,ss1,ss2,ss3,ss4} uses the asymmetric 
simple exclusion process approximation together with recent work by Tracy and Widom~\cite{tw1,tw2,tw3,tw4} in 
which exact formulas have been obtained for that process using an approach based on the Bethe ansatz.  
Another approach~\cite{cdr,dot,dot2,dk} is based on replicas, where the moments of $u(t,x,y)$ are 
related to the attractive $\delta$-Bose gas and computed via the Bethe ansatz.  
These developments indicate that there is an underlying integrable structure behind the KPZ and 
stochastic heat equations which is not yet fully understood.

On the other hand, it has recently been found that there are exactly solvable discrete (or semi-discrete) directed 
polymer models~\cite{cosz,mo,noc,oy,osz,s,sv}, yielding yet another approach.  For these models, there is a direct 
connection to integrable systems (specifically to the quantum Toda lattice) 
which one might hope to understand in the continuum scaling limit.
We will describe here one of the main results of the paper~\cite{noc}, which provided the motivation 
for the present work. 

Define an `up/right path' in $\R\times\bZ$ to be an increasing path which either proceeds to the right
or jumps up by one unit.  For each sequence $0<t_1<\cdots<t_{N-1}<t$ we can 
associate an up/right path $\pi$ from $(0,1)$ to $(t,N)$ which has jumps between the 
points $(t_i,i)$ and $(t_i,i+1)$, for $i=1,\ldots,N-1$, and is continuous otherwise.  
Let $B(t)=(B_1(t),\ldots,B_N(t))$, $t\ge 0$, be a standard Brownian motion in $\R^N$ 
and define
$$Z^N(t) = \int e^{E(\pi)} d\pi ,$$
where
$$E(\pi) = B_1(t_1)+B_2(t_2)-B_2(t_1)+\cdots + B_N(t)-B_N(t_{N-1})$$
and the integral is with respect to Lebesgue measure on the Euclidean set
\begin{equation}
\label{paths} \{(t_1,\ldots,t_{N-1})\in\R^{N-1}:\ 0<t_1<\cdots<t_{N-1}<t\}\end{equation}
of all such paths.  This is the partition function for the model, which was introduced in \cite{oy}.  
In~\cite{noc} an explicit integral formula is obtained for the Laplace transform of the distribution 
of $Z^N(t)$, via the following `multi-layer' construction.  For $n=1,2,\ldots,N$, define
\begin{equation}\label{sdpfs}
Z_n^N(t) = \int e^{E(\pi_1)+\cdots+E(\pi_n)} d\pi_1\ldots d\pi_n ,
\end{equation}
where the integral is with respect to Lebesgue measure on the set of $n$-tuples
of non-intersecting (disjoint) up/right paths with respective initial points $(0,1),\ldots,(0,n)$ 
and respective end points $(t,N-n+1),\ldots,(t,N)$.  Here, the notion of Lebesgue measure
arises by identifying this set of paths as a suitable subset of $\R^{n(N-n)}$, as in \eqref{paths}.
Define $X_1^N(t)=\log Z_1^N(t)$ and, for $n\ge 2$, 
$X_n^N(t)=\log [Z^N_n(t)/Z^N_{n-1}(t)]$.
The relevance of this construction is analogous to the role of the RSK correspondence 
in the study of last passage percolation and longest increasing subsequence problems;
in this setting it is based on a geometric variant of the RSK correspondence.
The main result in~\cite{noc} is the following.
\begin{thm}\label{qt} 
The process $X^N(t)=(X^N_1(t),\ldots,X^N_N(t))$, $t>0$, is a diffusion process in 
$\R^N$ with infinitesimal generator given by
\begin{equation}\label{pd}
\mathcal{L}=\frac12 \Delta + \nabla \log \psi_0\cdot\nabla
\end{equation}
where $\psi_0$ is a (particular) ground state eigenfunction of the quantum Toda lattice
Hamiltonian
$$H=-\Delta+2 \sum_{i=1}^{N-1} e^{x_{i+1}-x_i}.$$
\end{thm}
The function $\psi_0$ is a (class-one) $GL(n,\R)$-Whittaker function.
The diffusion process with generator $\mathcal L$ is the analogue of Dyson's Brownian motion in this setting.
The law of the logarithmic partition function $X_1^N(t)=\log Z^N(t)$, 
which can now be seen as the analogue of the
top line (or largest eigenvalue) in the Dyson process, is determined as a corollary.
Similar results have been obtained in~\cite{cosz,osz} for a lattice directed polymer model with 
log-gamma weights which was introduced by Sepp\"al\"ainen~\cite{s}.  In that setting, the eigenfunctions 
of the quantum Toda lattice (also known as Whittaker functions) continue to play a central role, as does 
the geometric lifting of the RSK correspondence which was introduced and studied in the 
papers~\cite{K,NY}.  

Motivated by these developments, in this paper
we introduce continuum versions of the partition functions $Z^N_n(t)$, which we
expect will play an important role in our understanding of the integrable structure which appears to lie behind 
the KPZ and stochastic heat equations.  The continuum partition functions are defined as follows.  
For $n=1,2,\ldots$, $t\ge 0$ and $x,y\in\R$, define
\begin{align}\label{woe}
\Z_n(t,x,y) = p(t,x,y)^n \Big( 1+\sum_{k=1}^\infty \int_{\Delta_k(t)}& \int_{\R^k}
R^{(n)}_k((t_1,x_1),\ldots,(t_k,x_k)) \\
& \times \W(dt_1,dx_1)\cdots \W(dt_k,dx_k) \Big),\nonumber
\end{align}
where $R^{(n)}_k$ is the $k$-point correlation function for a collection of $n$ 
non-intersecting Brownian bridges which all start at $x$ at time $0$ and all end 
at $y$ at time $t$, as defined in section 2 below.
Note that $\Z_1=u$ is the solution of the stochastic heat equation defined by (\ref{chaos}) above.
To explain the above definition we note that, just as in (\ref{gwf}), we can formally write (\ref{woe}) as
\begin{equation}\label{woe1}
\Z_n(t,x,y) = p(t,x,y)^n \E\exp^\diamond \left(\sum_{i=1}^n\int_0^t dW(s,X^i_s) ds\right) ,
\end{equation}
where $(X^1_s,\ldots,X^n_s,\ 0\le s\le t)$ denote the trajectories of $n$ 
non-intersecting Brownian bridges which all start at $x$ at time $0$ and 
all end at $y$ at time $t$.  These should be compared with the partition functions (\ref{sdpfs}).
The first main result of this paper is that the continuum partition functions $Z_n(t,x,y)$ are well-defined.  
\begin{thm}\label{con}
The series (\ref{woe}) is convergent in $L^2(\W)$.
\end{thm}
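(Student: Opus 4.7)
The plan is to apply It\^o's isometry to reduce the $L_2(\W)$-convergence of (\ref{woe}) to the finiteness of an exponential moment of a total intersection local time, and then to bound that moment using classical one-dimensional estimates.

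By the It\^o isometry for iterated Wiener integrals against white noise,
$$\|\Z_n(t,x,y)\|_{L_2(\W)}^2=p(t,x,y)^{2n}\Bigl(1+\sum_{k=1}^{\infty}\int_{\Delta_k(t)\times\R^k}R^{(n)}_k(\vec t,\vec x)^{2}\,d\vec t\,d\vec x\Bigr),$$
so the task is to show the series on the right is finite. Interpreting $R^{(n)}_k(\vec t,\vec x)=\E\bigl[\prod_{j=1}^k\sum_{i=1}^n\delta(x_j-B_i(t_j))\bigr]$, where $B=(B_1,\ldots,B_n)$ is the non-intersecting bridge system, squaring by introducing an independent copy $B'=(B'_1,\ldots,B'_n)$, integrating in $\vec x$ (which collapses each pair $\delta(x_j-B_i(t_j))\delta(x_j-B'_{i'}(t_j))$ into $\delta(B_i(t_j)-B'_{i'}(t_j))$), and using the identity $\int_{\Delta_k(t)}d\mu^{\otimes k}=\frac{1}{k!}(\int_0^t d\mu)^k$ for symmetric integrands gives
$$1+\sum_{k\geq 1}\int_{\Delta_k(t)\times\R^k}R^{(n)}_k(\vec t,\vec x)^{2}\,d\vec t\,d\vec x=\E\bigl[\exp L(B,B')\bigr],\qquad L(B,B')=\sum_{i,i'=1}^{n}\int_0^t\delta\bigl(B_i(s)-B'_{i'}(s)\bigr)\,ds.$$
The theorem therefore reduces to showing $\E[\exp L(B,B')]<\infty$.

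For $n=1$ this is the classical exponential-moment bound for the intersection local time of two independent Brownian bridges in one dimension: using $p(s,a,u)^2=p(2s,0,0)\,p(s/2,a,u)$ together with Chapman--Kolmogorov, one obtains $\E[L^k]$ of order $k!\,t^{(k-1)/2}/\Gamma\bigl((k+1)/2\bigr)$, so the series for $\E[\exp L]$ converges. For general $n\geq 1$, H\"older's inequality applied to $L=\sum_{i,i'}L_{i,i'}$ reduces the question to finiteness of $\E[\exp(n^2 L_{i,i'})]$ for each pair $(i,i')$. The marginal law of $B_i$ inside the $n$-bridge system arises, via the Karlin--McGregor determinantal representation and its limit at coincident start and end points, as an $h$-transform of a Brownian bridge whose Radon--Nikodym derivative is bounded on any interior subinterval $[\epsilon,t-\epsilon]$; this transfers the $n=1$ estimate to the contribution of that subinterval. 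The remaining contribution from $[0,\epsilon]\cup[t-\epsilon,t]$ is controlled by noting that on those short intervals the moments of $L_{i,i'}$ remain of order $\epsilon^{k/2}$ times the $n=1$ moment bound, so their exponential moment is finite for $\epsilon$ chosen small enough.

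The main obstacle is the endpoint behavior: at $s=0$ and $s=t$ all $n$ bridges collapse to a single point, the Karlin--McGregor determinants vanish, and the single-bridge marginal concentrates singularly. Making the ``bounded $h$-transform on $[\epsilon,t-\epsilon]$'' step precise requires an honest computation with the Vandermonde-weighted Gaussian ensemble that gives the limiting density of the non-intersecting bridge system, and that is the one step that does not reduce to soft inequalities.
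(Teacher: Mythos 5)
Your opening reduction is the same as the paper's: by the It\^o isometry the $L_2(\W)$-convergence of (\ref{woe}) is equivalent to $\E[e^{L}]<\infty$, where $L$ is the total intersection local time between the non-intersecting bridge system and an independent copy, and a pairwise H\"older decomposition reduces this to exponential moments of the pairwise local times. From that point on, however, your argument has a genuine gap, and it is exactly at the step you yourself flag. First, the assertion that the marginal law of a single path $B_i$ of the coincident-endpoint ensemble is, on $[\epsilon,t-\epsilon]$, an $h$-transform of a Brownian bridge with \emph{bounded} Radon--Nikodym derivative is neither proved nor plausible as stated: the Doob transform density for the full system involves Vandermonde factors that are unbounded in the spatial variables, and the one-path marginal of the $n$-path ensemble is not in any obvious way a one-dimensional $h$-transform at all, so ``bounded density on interior intervals'' would itself require the honest computation you defer. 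Second, and more seriously, the endpoint contribution is the entire difficulty of the theorem, and your treatment of it (``moments of $L_{i,i'}$ on $[0,\epsilon]$ are of order $\epsilon^{k/2}$ times the $n=1$ bound'') is an unsubstantiated claim: near $s=0$ all $2n$ paths are collapsed onto the single point $x$ and the law of the ensemble there is governed by the singular repulsion, so one cannot simply transfer the two-free-bridges estimate without quantitative control of that degenerate regime.

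For comparison, the paper resolves precisely this issue by a different route after the common reduction: it splits $L$ at time $t/2$ and uses symmetry plus Cauchy--Schwarz to isolate one degenerate endpoint; on $[0,t/2]$ it compares the two bridge systems (up to a bounded Radon--Nikodym density and a time change) to two \emph{independent Dyson Brownian motions started at the origin}; and it then controls the pairwise local times via Tanaka's formula applied to $U^i-V^j$, which reduces everything to exponential moments of the drift integrals $\int_0^T|D^i_s|\,ds$. These are handled by an inductive telescoping argument exploiting $\int_0^T D^i_s\,ds=U^i_T-\beta^i_T$ and the nonnegativity of the individual terms $\xi_{ij}$. That SDE/Tanaka/induction step is the analytic core of the proof, and it is the part your proposal replaces with an acknowledged placeholder; as written, the proposal does not establish the theorem.
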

The proof will given in Section~\ref{proof-con}.
Define $u_1=u$ and, for $n\ge 2$, $u_n=Z_n/Z_{n-1}$.
From Theorem~\ref{qt} we expect that, for each fixed $t>0$, the process
$\mathbf{u}_t(x)=\{ u_n(t,0,x),\ n\in\N \}$, indexed by $x\in\R$, is a diffusion process in $\R^\N$ 
which is a scaling limit at the edge of the diffusion with generator (\ref{pd}), just as the multi-layer Airy process 
introduced in~\cite{ps} is a scaling limit at the edge of Dyson's Brownian motion.  Note that it is not at all clear a priori
that this process should have the Markov property: in fact, this is quite a remarkable property
and can be regarded as the analogue, in this setting, of Pitman's celebrated `$2M-X$' theorem.  
Moreover, for large $t$, the process $\mathbf{u}_t(x),\ x\in\R$
should rescale (after taking logarithms) to the multi-layer Airy process.
At present, we only know this to be the case for the one-dimensional distributions of the first layer:
it has been shown in the papers~\cite{acq,ss4} that the distribution of
$\log[u(t,0,x)/p(t,0,x)]$ (which is independent of $x$) converges in a suitable
scaling limit to the Tracy-Widom distribution.  This result on the first layer
has been tentatively extended to the finite-dimenisonal distributions by
Prolhac and Spohn~\cite{prs1,prs2}.  As such, it is natural to regard the process
$\mathbf{u}_t(x),\ x\in\R $ as the analogue of the multi-layer Airy process in the setting of the
KPZ and stochastic heat equations.  Similarly, for fixed $t>0$, the random sequence 
$\{ u_n(t,0.0),\ n\in\N \}$ can be regarded as the analogue of the Airy point process.  
There are many things to understand in these directions, especially the law of the process 
$(\mathbf{u}_t(x),\ x\in\R)$.  For further recent progress in this direction, see~\cite{bc1,ch}.

It is also natural ask about the evolution of $\mathbf{u}_t$ as $t$ varies: after all, this is an extension
of the process $(u(t,0,\cdot), t>0)$, which evolves according to the stochastic heat equation and (more or less
as an immediate consequence) has the Markov property.  However, from the definition of $\mathbf{u}_t$ 
(which involves non-intersecting Brownian bridges over the time interval $[0,t]$ with the 
{\em same starting and ending points}) there is really no reason to expect the extended process
$(\mathbf{u}_t,\ t> 0)$ to have the Markov property.  Nevertheless, we will present a number of results 
which strongly indicate that it does indeed, somewhat remarkably, have the Markov property.
In fact, we believe that for each $n$, the process 
$$\{ (u_1(t,0,x),\ldots, u_n(t,0,x)),\ t\ge 0\}$$ 
is Markov and give a proof of this claim in the case $n=2$ (see Corollary~\ref{markov2}).

In order to develop a better understanding of the continuum partition functions $Z_n(t,x,y)$,
we consider their analogues when the space-time white-noise potential $\dot{W}(t,x)$ is replaced by 
a smooth time-varying potential $\phi(t,x)$.  For example, we can take $\phi$ to be in the Schwartz 
space $E$ of rapidly decreasing smooth ($C^\infty$) functions on $\R_+\times\R$.
For each $n=1,2,\ldots$, $t>0$ and $x,y\in\R$, define
\begin{equation}\label{fe1}
Z^\phi_n(t,x,y)= p(t,x,y)^n \E\exp\left(\sum_{i=1}^n\int_0^t \phi(s,X^i_s) ds\right) ,
\end{equation}
where once again  $(X^1_s,\ldots,X^n_s,\ 0\le s\le t)$ denote the trajectories of $n$ 
non-intersecting Brownian bridges which all start at $x$ at time $0$ and 
all end at $y$ at time $t$.  On one hand, these are the analogues of the 
partition functions $\Z_n$ introduced above with the white noise $\dot{W}$ 
replaced by the smooth potential $\phi$.  On the other, 
they are directly related to the $\Z_n$ by the formula
\begin{equation}\label{S1}
Z^\phi_n(t,x,y)=\E \left[Z_n(t,x,y) \exp^\w (\W(\phi))\right],
\end{equation}
where 
$$W(\phi)=\int_0^\infty \int_{\R} \phi(s,x)W(ds,dx)$$ and
$\exp^\w (\W(\phi))$ is the Wick exponential of $\W(\phi)$
defined by
\[
\exp^\w({\W} (\phi))= \exp \left( W(\phi) -\frac{1}{2} \int_0^\infty \int_{\R} \phi(s,x)^2 dx ds \right).
\]
In other words, as a function of $\phi$, $Z^\phi_n(t,x,y)$ is the $S$-transform 
of the white noise functional $\Z_n(t,x,y)$ (see, for example,~\cite{houz}). 
In the following, we drop the superscript $\phi$ and write $Z_n=Z_n^\phi$.
Now, as above, we define $u=u_1=Z_1$ and, for $n\ge 2$, $u_n=Z_n/Z_{n-1}$.
Note that, by Feynman-Kac, $u(t,x,y)$ satisfies the heat equation
\begin{equation}\label{he}
\partial_t u=\frac12 \partial_y^2 u+\phi(t,y) u
\end{equation}
with initial condition $u(0,x,y)=\delta(x-y)$.
In this setting we will show, using a generalisation of the Karlin-McGregor formula
(see Propositions~\ref{nib} and \ref{kmg} below), that, for $n\ge 2$,
\begin{equation}\label{kmg-Z}
Z_n(t,x,y)=c_{n,t} \det\left[\partial_x^{i}\partial_y^j u(t,x,y)\right]_{i,j=0}^{n-1} ,
\end{equation}
where $c_{n,t} = t^{n(n-1)/2} (\prod_{j=1}^{n-1} j!)^{-1}$.  But this determinant is the Wronskian
associated with the solutions $u,\partial_x u,\ldots,\partial_x^{n-1}u$ of the heat equation (\ref{he}).
It follows (see for example~\cite{ah}) that the functions $u_n$ are in fact Darboux transformations
and satisfy the coupled system of heat equations
\begin{equation}\label{hs1}
\partial_t u_n=\frac12 \partial_y^2 u_n+[\phi(t,y)+ \partial_y^2 \log \left(Z_{n-1}/p^{n-1}\right)] u_n
\end{equation}
with initial conditions $u_n(0,x,y)=\delta(x-y)$.  These equations are not immediately meaningful if we 
replace the smooth potential $\phi$ by space-time white noise.  However they do suggest that, for each 
$n$, the multi-layer process (in the white noise setting)
$$(\Z_1(t,x,\cdot),\ldots,\Z_n(t,x,\cdot),\ t\ge 0)$$ 
has a Markov evolution.  In order to confirm this, there are two possible directions one could take.
The first, and most obvious one, is to try to make sense of these evolution equations with white-noise
replacing $\phi$.  To this end, it is helpful to consider the following change of variables which, as it happens,
also reveal some deeper structure as we shall see.  Set $\tau_0=1$ and, for $n\ge 1$, define 
$$\tau_n=\det\left[\partial_x^{i}\partial_y^j u(t,x,y)\right]_{i,j=0}^{n-1} ,\qquad a_n=\frac{\tau_{n-1}\tau_{n+1}}{\tau_{n}^2}.$$
We will show (see Propositon~\ref{a-pdes}) that the evolution of the $a_n$ is given by 
\begin{equation}\label{a-pde}
\partial_t a_n = \frac12\partial_y^2 a_n + \partial_y[a_n\partial_y \log u_n].
\end{equation}
It seems to be the case that these evolution equations will make sense as stochastic partial differential equations 
in the white-noise setting~\cite{h}.  

We remark here in passing on an interesting connection to the 2D Toda equations.
We will show (see Lemma~\ref{sylvester}) that $a_n=\partial_{xy}\log \tau_n$, for each $n$.
Thus, if we define,
for $n\ge 1$,
$$q_n=\log (\tau_n/\tau_{n-1}) = \log u_n-\log[t^{n-1}(n-1)!],$$ then $a_n=e^{q_{n+1}-q_n}$ and 
the $q_n$ satisfy the 2D Toda
equations $$\partial_{xy} q_n = e^{q_{n+1}-q_n}-e^{q_n-q_{n-1}},\qquad n\ge 1,$$
with the convention that $q_0=+\infty$.  In this notation, the time-evolution of the $a_n$ is given by
$$\partial_t a_n = \frac12\partial_y^2 a_n + \partial_y[a_n\partial_y q_n].$$

We will also show that there is in fact a second, and quite different, approach one can take in order to 
understand (and prove) the Markov property of the multi-layer process which exploits two quite remarkable
formulas, presented in Theorems~\ref{if} and \ref{kmgw} below, concerning a natural extension of the
partition functions $Z_n$ to collections of non-intersecting Brownian paths which start at distinct points
and end at distinct points.  These are defined as follows.  Set 
\begin{equation}\label{wc1}
\Lambda_n=\{\x=(x_1,\ldots,x_n)\in\R^n:\ x_1\ge\cdots\ge x_n\},
\end{equation} 
and denote the interior of $\Lambda_n$ by $\Lambda^\circ_n$.
For each $t>0$ and for $\x=(x_1,\ldots,x_n)$ and 
$\y=(y_1,\ldots,y_n)$ in $\Lambda^\circ_n$, define
\begin{equation}
K_n(t,\x,\y)= p^*_n(t,\x,\y) \E\exp\left(\sum_{i=1}^n\int_0^t \phi(s,X^i_s) ds\right) ,
\end{equation}
where $(X^1_s,\ldots,X^n_s,\ 0\le s\le t)$ denote the trajectories of a collection of non-intersecting 
Brownian bridges which start at positions $\x=(x_1,\ldots,x_n)$ and end at positions $\y=(y_1,\ldots,y_n)$
at time $t$, and $p^*_n(t,\x,\y)$ is the transition density of a Brownian motion in $\Lambda_n$ killed 
when it first hits the boundary, given by the Karlin-McGregor formula~\cite{km},
$$p^*_n(t,\x,\y) = \sum_{\sigma\in a_n} \sgn(\sigma) \prod_{i=1}^n p(t,x_i,y_{\sigma(i)}).$$
According to  the Feynman-Kac formula, $K_n$ satisfies the equation
\begin{equation}\label{sed1}
\partial_t K_n = \frac12 \Delta_{\y} K_n + \sum_i \phi(t,y_i) K_n
\end{equation}
with Dirichlet boundary conditions on $\partial \Lambda_n$
and initial condition $K_n(0,\x,\y)=\prod_i\delta(x_i-y_i)$.  
In Proposition~\ref{kmg} below we show that the following generalisation of the Karlin-McGregor formula holds:
\begin{equation}\label{kmgw1}
K_n(t,\x,\y) = \det [u(t,x_i,y_j)]_{i,j=1}^{n}.
\end{equation}
Now, define, for $t>0$ and $\x,\y\in\Lambda_n$,
\begin{equation}
M_n(t,\x,\y) = \frac{K_n(t,\x,\y)}{\Delta(\x)\Delta(\y)}.
\end{equation}
This extends continuously to the boundary of $\Lambda_n\times\Lambda_n$;
by Proposition~\ref{kmg}, for $x\in\R$,
\begin{equation}\label{dx1}
M_n(t,x\1 ,\y) = \Delta(\y)^{-1} \det\left[\partial_x^{i-1} u(t,x,y_j)\right]_{i,j=1}^n.
\end{equation}
Rather surprisingly, we will show that the apparently richer object 
$M_n(t,x\1 ,\cdot)$ is, for a fixed $x\in \R$ and $t>0$, 
given as a function of $$(Z_1(t,x,\cdot),\ldots,Z_n(t,x,\cdot)).$$
For $\z\in\Lambda_{n-1}$ and $\y\in\Lambda_n$, write $\z\prec \y$ if 
$y_1\ge z_1> y_2\ge\cdots> y_{n-1}\ge z_{n-1}> y_n$.
For $\y\in\Lambda_n^\circ$, denote by $GT(\y)$ the Gelfand-Tsetlin polytope 
$$\{(y^1,y^2,\ldots,y^{n-1})\in
\Lambda_1\times\Lambda_2\times\cdots\times\Lambda_{n-1}:\ \
y^1\prec y^2\prec\cdots\prec y^{n-1}\prec y\}.$$
Then (see Theorem~\ref{if}) for $t>0$, $x\in\R$ and $\y\in\Lambda_n^\circ$,
\begin{equation}\label{if1}
M_n(t,x\1 ,\y)= \Delta(\y)^{-1} \prod_{i=1}^n u(t,x,y_i)
\int_{GT(\y)} \prod_{k=1}^{n-1} \prod_{i=1}^{n-k} a_k(t,x,y^{n-k}_i) dy^{n-k}_i .
\end{equation}
In the case $\phi=0$, this reduces to the fact that the volume of $GT(\y)$ 
is proportional to $\Delta(\y)$.  Now, if the analogous formula holds in the white-noise 
setting, with $K_n(t,\x,\y)$ defined by the chaos series (\ref{WOE}) below, then
this imply the Markov property of the multi-layer process
$$(\Z_1(t,x,\cdot),\ldots,\Z_n(t,x,\cdot),\ t\ge 0).$$ 
This is because the process for each $x\in\R$ and for each $n$, the process 
$(M_n(t,x\1 ,\cdot), t\ge 0)$ clearly has the Markov property.  However, an
important ingredient in this argument is the Karlin-McGregor formula (\ref{kmgw1}).
A priori, there is no reason to expect this to hold in the white-noise setting.
In fact, the natural analogue of this formula in the white-noise setting is the formula
$$K_n(t,\x,\y) = \det {}^\diamond [u(t,x_i,y_j)]_{i,j=1}^{n},$$
where $\det^\diamond$ indicates that the products in the expansion of the determinant
are {\em Wick products}.  Nevertheless, quite remarkably, we will show in Section~\ref{KMG}
that the formula (\ref{kmgw1}) does in fact hold in the white-noise setting.  In Section~\ref{evol}
we argue that, modulo technical considerations, this indicates that the analogue of the integral 
formula (\ref{if1}) should hold in the white-noise setting.  We give a proof in the case $n=2$,
just to demonstrate that it can be done.  The main technical issue concerns the continuity of
$M_n(t,\x,\y)$ at the boundary of $\Lambda_n\times\Lambda_n$.  A proof of the existence of 
an almost surely continuous extension to the boundary based on Kolmogorov's criterion would 
be long and technical and beyond the scope of the present work.  Here we satisfy ourselves with 
a continuous extension in  $L^2$, which allows us to establish the analogue of the integral formula 
(\ref{if1}), and hence the Markov property of the multi-layer process, in the special case $n=2$.
We remark that an interesting consequence of our proof of the $L^2$-continuity property is
that the ratio of two solutions to the stochastic heat equation is in $H^1$.  In fact, such ratios have 
recently been shown (in a slightly different setting, for smooth initial data and periodic boundary
conditions) by Hairer~\cite{h} to be in $C^{3/2-\epsilon}$.  The index $3/2$ is consistent with our 
expectation that the continuum partition functions $\Z_n(t,x,y)$ are locally Brownian in the space variable $y$.

We conclude this section with some remarks on the RSK interpretation.
As remarked above, the multi-layer construction presented in this
paper is based on a geometric lifting of the RSK correspondence, so it is 
natural to consider such an interpretation in the continuum setting.  
The analogue of RSK in the context of smooth potentials is the mapping
$$\phi\big|_{[0,t]\times\R} \mapsto \{u_n(t,0,\cdot),\ n\ge 1\}.$$
In the language of RSK, $\{u_n(t,0,x),\ n\ge 1;\ x\ge 0\}$ is the $P$-tableau,
$\{u_n(t,0,-x),\ n\ge 1;\ x\ge 0\}$ is the $Q$-tableau, and their common `shape'
is the sequence $\{u_n(t,0,0),\ n\ge 1\}$.  
We note the following symmetry, which corresponds to a 
well known symmetry property of the RSK correspondence.
Writing $f=\phi\big|_{[0,t]\times\R}$, 
$P(f)=\{u_n(t,0,x),\ n\ge 1;\ x\ge 0\}$,
$Q(f)=\{u_n(t,0,-x),\ n\ge 1;\ x\ge 0\}$ and $f^\dagger(s,x)=f(s,-x)$, we have:
$P(f^\dagger)=Q(f)$ and $Q(f^\dagger)=P(f)$.
Similarly, in the white noise setting, we define 
$$P(\W_{[0,t]})=\{\U_n(t,0,x),\ n\ge 1;\ x\ge 0\}$$
and
$$Q(\W_{[0,t]})=\{\U_n(t,0,-x),\ n\ge 1;\ x\ge 0\}$$ where
$\W_{[0,t]}$ denotes the restriction of $\W$ to $[0,t]\times\R$.
As explained above, we expect that, for each $t>0$,
$P(\W_{[0,t]})$ and $Q(\W_{[0,t]})$ are diffusion processes in $\R^\N$ (indexed
by $x\ge 0$) which are conditionally independent given their starting position 
$\{\U_n(t,0,0),\ n\ge 1\}$.  This would be the analogue, in this setting,
of Pitman's `$2M-X$' theorem.  Since the first draft of the present paper appeared,
substantial progress has been made in this direction by Corwin and Hammond~\cite{ch},
where a natural candidate for this infinite-dimensional diffusion process has been
constructed.

The outline of the paper is as follows.  In the next section we provide some background 
on non-intersecting Brownian motions and their bridges. Following this we study the analogue
 of the partition functions when the space-time white noise is replaced  by a smooth time-varying potential. 
 In this setting we establish a connection with Darboux 
transformations of solutions to the heat equation, which give rise to evolution equations for the multi-layer 
process of partition functions.
 These equations are not directly meaningful in the white noise setting, but suggest that the multi-layer process 
 has a Markovian evolution.  We also give proofs of the integral formula~(\ref{if1}) above, the evolution
 equations (\ref{a-pde}) and remark on the connection with the 2D Toda equations.  In Section~\ref{proof-con}
 we present the proof of Theorem~\ref{con} on the existence of the continuum partition functions in the white-noise setting.
In Section~\ref{KMG}, we show that the Karlin-McGregor formula (\ref{kmgw}) holds in the white-noise setting.
In Section~\ref{evol} we consider the evolution of the multi-layer process in the white-noise setting and 
give a proof of the Markov property for $n=2$.

\bigskip

 {\em Acknowledgements.}  Thanks to Ivan Corwin, Martin Hairer, Jeremy Quastel,
Gregorio Moreno-Flores and Roger Tribe for helpful discussions.  This research was 
supported in part by EPSRC grant EP/I014829/1.

\section{Non-intersecting Brownian motions}

Non-intersecting Brownian motions play a large role in this paper, and we record here definitions and facts concerning  them that will be useful to us.

Recall that
\begin{equation}\label{wc}
\Lambda_n=\{\x=(x_1,\ldots,x_n)\in\R^n:\ x_1\ge\cdots\ge x_n\},
\end{equation} and denote the interior of $\Lambda_n$ by $\Lambda^\circ_n$. Standard $n$-dimensional Brownian motion killed on exiting $\Lambda^\circ_n$ has transition densities given by the Karlin-McGregor forumla \cite{km},
 $$p^*_n(t,\x,\y) = \sum_{\sigma\in a_n} \sgn(\sigma) \prod_{i=1}^n p(t,x_i,y_{\sigma(i)}).$$
 Dyson Brownian motion is obtained as a Doob-$h$ transform of this killed process by the harmonic function
$\Delta(\x)=\prod_{i<j} (x_i-x_j)$.
It is self-dual relative the measure $ \Delta(\y)^2d\y$ 
and has transition densities (with respect to this measure) given by
\begin{equation}
\label{bridgedensity}
q_n(t,\x,\y)= \frac {p^*_n(t,\x,\y)}{\Delta(\x) \Delta(\y)}.
\end{equation}
\begin{lem}\label{qn}
For each $t>0$, the transition density $q_n(t,\x,\y)$ extends continuously to a 
uniformly bounded strictly positive function on 
$\Lambda_n \times \Lambda_n$.
\end{lem}
\begin{proof}
By the Harish-Chandra/Itzykson-Zuber formula, we can write
\begin{equation}\label{hciz}
q_n(t,x,y)=(2\pi)^{-n/2} t^{-n^2/2} c_n \int_{U(n)} e^{-\mbox{tr}(X-UYU^*)^2/2t} dU,
\end{equation}
where $1/c_n=\prod_{j=1}^{n-1}j!$, $X$ and $Y$ are diagonal matrices with entries given 
by the vectors $\x$ and $y$, and the integral is with respect to normalised Haar measure
on the group of $n\times n$ unitary matrices.
By bounded convergence, the RHS defines a continuous function on $\Lambda_n \times \Lambda_n$,
and is bounded by $(2\pi)^{-n/2} t^{-n^2/2} c_n$.  The strict positivity follows from the strict positivity
of the integrand.
\end{proof}
The semigroup property
\[
q_n(s+t,\x, \z)=\int_{\Lambda_n} q_n(s, \x,\y)q_n(t,\y,\z) \Delta(\y)^2d\y,
\]
thus also extends to the boundary, by continuity and dominated convergence.
Moreover, it follows from the representation~\eqref{hciz} that
$q_n(t,\x,\y)\Delta(\y)^2 d\y$ defines a probability measure on $\Lambda_n$
for every $t>0$ and $x\in\Lambda_n$.
Consequently, Dyson Brownian motion can be started from any point on the boundary of $\Lambda_n$.
In fact, as was shown by C\'epa and L\'epingle~\cite{cl}, it almost surely never subsequently 
returns to the boundary.

 Let $(H_t, t \geq 0)$ be a standard Brownian motion in the space of $n\times n$  Hermitian matrices. Then the vector of ordered  real-valued eigenvalues  of $H_t$ evolves as Dyson Brownian motion. This can be verified by deriving the transition density for the eiqenvalues using the Harish-Chandra formula. Alternatively, following Dyson's original approach, applying It\^{o}'s formula shows that, denoting  the vector of eigenvalues  by $(X^1_t, X^2_t,\ldots, X^n_t)$, the following stochastic differential equations are satisfied.
 \begin{equation}
\label{sdes}
X^i_t= X^i_0+\beta^i_t+\sum_{j\ne i} \int_0^t \frac{ds}{X^i_s-X^j_s} , 
\end{equation}
where $\beta^i$, $i=1,2,\ldots,n$ are a collection
of independent standard one-dimensional Brownian motions. Note that these equations hold even if  the intitial value $H_0$ of the Hermitian Brownian motion has repeated  eigenvalues, in which case the Dyson Brownian motion is starting from the boundary of $\Lambda_n$. This can been seen by the following argument. Applying It\^{o}'s formula from  some strictly positive time $\epsilon$ onwards ( recalling the process of eigenvalues does not visit the boundary) we obtain for $t\geq \epsilon$,
\begin{equation}
\label{sde1}
X^i_t= X^i_\epsilon+\beta^{i,\epsilon}_t+\sum_{j\ne i} \int_\epsilon^t \frac{ds}{X^i_s-X^j_s} , 
\end{equation}
where $\beta^{i,\epsilon}$ are Brownian motions. Now for $\epsilon<\epsilon^\prime$ the increments of $\beta^{i, \epsilon}_{\epsilon^\prime-\epsilon+\cdot}$ and $\beta^{i, \epsilon^\prime}$ agree, and by virtue of this consistency there exist  Brownian motions  $\beta^i$  starting from $0$ such that $\beta^{i, \epsilon}_t= \beta^i_{\epsilon+t}-\beta^i_\epsilon$ for all $\epsilon>0$. Now returning to \eqref{sde1}, writing it using $\beta^i$, and letting $\epsilon$ tend down to $0$ gives \eqref{sdes} as desired even in the case when  the Dyson Brownian motion starts from the boundary.

We can construct bridges for Dyson Brownian motion using the standard Markovian framework, see for example Proposition 1 of \cite{fitz}.  Specifically given points $\x$ and $\y$ belonging to $\Lambda_n$, we define the bridge from $\x$  at time $0$ ending at $\y$ at time $t$, to be a process $({\bf X}_s, 0\leq s\leq t)$ whose law over $[0,s]$, for any $s<t$, is absolutely continuous with respect to that of Dyson Brownian motion starting from $\x$, with a density
\begin{equation}\label{bridge}
\frac{q_n( t-s, {\bf X}_s,\y)}{q_n(t,\x,\y)}.
\end{equation}
Note that this is well-defined as the denominator is strictly positive, by Lemma~\ref{qn} above.
We will also refer to this bridge, somewhat informally,  as a collection of non-intersecting Brownian bridges.  
In the special case $\x=x\1$, $\y=y\1$, where $x,y\in\R$ and we denote by $\1\in\R^n$ the vector with all coordinates 
equal to 1, the process is also often referred to as a watermelon.
The correlation function $R^{(n)}_k((t_1,x_1), \ldots, (t_k,x_k))$ appearing in the definition \eqref{woe} is
defined to be the sum over $i_1, i_2, \ldots,i_k$ of the (continuous) probability densities of $(X^{i_1}_{t_1}, \ldots, X^{i_k}_{t_k})$ 
with respect to Lebesgue measure evaluated at $(x_1, x_2, \ldots,x_k)$.  
Correlation functions for non-intersecting Brownian bridges with arbitrary starting and ending positions,
which appear in Section 5 below, are defined analogously.

\section{Darboux transformations }

In this section we replace the white noise potential $\dot{W}$ by a smooth potential $\phi$,
which we assume for convenience to be in the Schwartz space $E$ of rapidly 
decreasing smooth ($C^\infty$) functions on $\R_+\times\R$.

For each $n=1,2,\ldots$, $t>0$ and $x,y\in\R$, define
\begin{equation}\label{fe}
Z^\phi_n(t,x,y)= p(t,x,y)^n \E\exp\left(\sum_{i=1}^n\int_0^t \phi(s,X^i_s) ds\right) ,
\end{equation}
where $(X^1_s,\ldots,X^n_s),\ 0\le s\le t,$ denote the trajectories of $n$ 
non-intersecting Brownian bridges which all start at $x$ at time $0$ and 
all end at $y$ at time $t$.  On one hand, these are the analogues of the 
partition functions $\Z_n$ introduced in the previous section with the
white noise $\dot{W}$ replaced by a smooth potential $\phi$.  On the other, 
they are directly related to the $\Z_n$ by the formula
\begin{equation}\label{S}
Z^\phi_n(t,x,y)=\E \left[Z_n(t,x,y) \exp^\w (\W(\phi))\right],
\end{equation}
where 
$$W(\phi)=\int_0^\infty \int_{\R} \phi(s,x)W(ds,dx)$$ and
$\exp^\w (\W(\phi))$ is the Wick exponential of $\W(\phi)$
defined by
\[
\exp^\w({\W} (\phi))= \exp \left( W(\phi) -\frac{1}{2} \int_0^\infty \int_{\R} \phi(s,x)^2 dx ds \right).
\]
In other words, as a function of $\phi$, $Z^\phi_n(t,x,y)$ is the $S$-transform 
of the white noise functional $\Z_n(t,x,y)$ (see, for example,~\cite{houz}). 
To see that \eqref{S} holds, on the RHS replace $\Z_n(t,x,y)$ by the 
series \eqref{woe} and $\exp^\w (W(\phi))$ by its Wiener chaos expansion;
computing the expectation of the product of these two series we obtain
\begin{align*}
p(t,x,y)^n \sum_{k=0}^\infty \int_{\Delta_k(t)} \int_{\R^n} \phi(t_1,x_1) & \ldots \phi(t_k,x_k) \\
 \times R^{(n)}_k((t_1,x_1), \ldots,& (t_k,x_n))  dx_1 \ldots dx_k dt_1\ldots dt_k\\
& =p(t,x,y)^n  \E\exp\left(\sum_{i=1}^n\int_0^t \phi(s,X^i_s) ds\right) .
 \end{align*}

For the remainder of this section we will only consider the case of smooth 
potential $\phi$.  For notational convenience we will drop the superscript
and simply write $Z_n(t,x,y)=Z^\phi_n(t,x,y)$.
By the Feynman-Kac formula, $u:=Z_1$ satisfies the heat equation
\begin{equation}
\partial_t u=\frac12 \partial_y^2 u+\phi(t,y) u
\end{equation}
with initial condition $u(0,x,y)=\delta(x-y)$.
\begin{prop}\label{nib} For $n\ge 2$,
\begin{equation}
Z_n(t,x,y)=c_{n,t} \det\left[\partial_x^{i}\partial_y^j u(t,x,y)\right]_{i,j=0}^{n-1} ,
\end{equation}
where $c_{n,t} = t^{n(n-1)/2}c_n$ and $1/c_n= \prod_{j=1}^{n-1} j!$.
\end{prop}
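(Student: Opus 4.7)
The plan is to first establish a Karlin--McGregor identity for $n$ Brownian bridges with strictly ordered, distinct endpoints, and then pass to the coalescing limit in which all starting points tend to $x$ and all endpoints tend to $y$.

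\textbf{Step 1 (Karlin--McGregor for distinct endpoints).} For strictly ordered $\mathbf{x} = (x_1 < \cdots < x_n)$ and $\mathbf{y} = (y_1 < \cdots < y_n)$, the first claim is
$$\det[Z(t,x_i,y_j)]_{i,j=1}^n \;=\; \det[p(t,x_i,y_j)]_{i,j=1}^n \cdot \E \exp\!\left(\sum_{i=1}^n \int_0^t \phi(s,X^i_s)\,ds\right),$$
where the right-hand expectation is over $n$ non-intersecting Brownian bridges from $\mathbf{x}$ to $\mathbf{y}$. This is a consequence of the Feynman--Kac representation $Z(t,x,y) = p(t,x,y)\,\E_{x\to y}^{\mathrm{br}}[\exp\int\phi]$: expanding the determinant as an alternating sum over permutations and applying the classical Karlin--McGregor reflection argument, whose applicability relies on the fact that $\exp(\sum_i \int \phi(s,X^i_s)\,ds)$ is symmetric under permutations of the $n$ path labels.

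\textbf{Step 2 (Coalescing limit).} Setting $x_i = x + (i-1)\epsilon$ and $y_j = y + (j-1)\epsilon$ and letting $\epsilon \downarrow 0$, I would Taylor-expand $Z(t,x_i,y_j)$ about $(x,y)$ and apply the Cauchy--Binet formula to obtain
$$\det[Z(t,x_i,y_j)]_{i,j=1}^n \;=\; \epsilon^{n(n-1)} \det[\partial_x^i \partial_y^j Z(t,x,y)]_{i,j=0}^{n-1} + o(\epsilon^{n(n-1)}),$$
the leading exponent and coefficient arising from a pair of confluent Vandermonde determinants supported on $\{0,1,\ldots,n-1\}$; the same expansion holds with $p$ in place of $Z$. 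Concurrently, the non-intersecting Brownian bridges with endpoints $\mathbf{x}_\epsilon, \mathbf{y}_\epsilon$ converge weakly to the non-intersecting Brownian bridges from $x$ to $y$ appearing in the statement, and since $\phi$ is Schwartz (so $\exp(\sum\int\phi)$ is bounded) the expectation passes to the limit.

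\textbf{Step 3 (Identifying the constant).} Dividing the Step 1 identity by $\det[p(t,x_i,y_j)]$, letting $\epsilon \to 0$, and multiplying by $p(t,x,y)^n$ gives
$$Z_n(t,x,y) \;=\; \frac{p(t,x,y)^n}{\det[\partial_x^i \partial_y^j p(t,x,y)]_{i,j=0}^{n-1}} \cdot \det[\partial_x^i \partial_y^j Z(t,x,y)]_{i,j=0}^{n-1}.$$
The explicit form of $c_{n,t}$ then follows from the identity $\partial_y^k p(t,x,y) = (-1)^k t^{-k/2} H_k\!\bigl((y-x)/\sqrt{t}\bigr)\,p(t,x,y)$, where $H_k$ is the probabilists' Hermite polynomial, combined with the (point-independent) Hankel--Hermite identity $\det[H_{i+j}(v)]_{i,j=0}^{n-1} = (-1)^{n(n-1)/2} \prod_{k=0}^{n-1} k!$, which reflects the translation invariance of the free heat kernel.

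\textbf{Main obstacle.} The most delicate point is the weak-convergence claim in Step 2: the limiting process (non-intersecting bridges all starting at $x$ and all ending at $y$) lives on a thin subset of path space and must be realized through an entrance law from the boundary of the Weyl chamber $\{z_1 < \cdots < z_n\}$, in the spirit of the Dyson Brownian bridge. Granted this construction, and the continuity of the bounded continuous functional $\exp(\sum \int \phi)$ under the limit, the remaining content of the proof is algebraic: Karlin--McGregor, confluent Vandermonde, and the Hankel--Hermite identity.
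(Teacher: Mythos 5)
Your proposal follows essentially the same route as the paper. The paper first proves exactly your Step 1 identity (Proposition~\ref{kmg}: $\tilde Z_n(t,x,y)=\det[Z(t,x_i,y_j)]_{i,j=1}^n$ for distinct ordered endpoints) and then obtains Proposition~\ref{nib} by the coalescing limit $x\to\hat a$, $y\to\hat b$, comparing the confluent-Vandermonde asymptotics of $\det[Z(t,x_i,y_j)]$ with those of $p^*_n(t,x,y)=\det[p(t,x_i,y_j)]$ --- precisely your Steps 2--3. The one genuine difference is how Step 1 is proved: you use the Karlin--McGregor reflection/switching argument, which is legitimate here because the Feynman--Kac weight $\exp(\sum_i\int\phi(s,X^i_s)ds)$ depends only on the unordered particle cloud at each time and is therefore invariant under swapping path tails at the first collision time; the paper instead deduces the identity analytically, from Feynman--Kac plus uniqueness for the heat equation with potential $\sum_i\phi(t,y_i)$, Dirichlet conditions on $\partial\Lambda_n$ and decay at infinity. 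Your probabilistic route is self-contained; the paper's analytic route is the one it can recycle later (Theorem~\ref{kmgw}) in the white-noise setting, where no pathwise reflection argument is available. Your handling of the coalescing limit (entrance law for the Dyson-type bridge, boundedness of the functional) is at the same level of detail as the paper, which simply declares that limit immediate from the definitions; and your Hermite--Hankel evaluation of $\det[\partial_x^i\partial_y^j p]$ is just a direct substitute for the paper's appeal to the confluent limit of $p^*_n/(\Delta(x)\Delta(y))$.

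One point you should not gloss over in Step 3: your own ingredients do not produce the constant as printed in the statement. From $\partial_y^k p=(-1)^k t^{-k/2}H_k((y-x)/\sqrt t)\,p$ and $\det[H_{i+j}(v)]_{i,j=0}^{n-1}=(-1)^{n(n-1)/2}\prod_{k=1}^{n-1}k!$ one gets $\det[\partial_x^i\partial_y^j p(t,x,y)]_{i,j=0}^{n-1}=p(t,x,y)^n\,t^{-n(n-1)/2}\prod_{j=1}^{n-1}j!$, hence $c_{n,t}=p^n/\det[\partial_x^i\partial_y^j p]=t^{n(n-1)/2}\bigl(\prod_{j=1}^{n-1}j!\bigr)^{-1}$, i.e.\ the factorials end up in the denominator rather than the numerator. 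The case $\phi\equiv 0$, where $Z_n=p^n$ identically, confirms that this reciprocal form is the correct normalisation: for $n=3$ one computes $\det[\partial_x^i\partial_y^j p]=2p^3/t^3$, so the printed $c_{3,t}=2t^3$ is off by a factor $4=(1!\,2!)^2$. So your method is sound and in fact detects a misprint: in the paper's own proof the two displayed confluent limits are normalised inconsistently (the factor $(\prod_{k=1}^{n-1}k!)^{-2}$ is absorbed in one limit but not the other), which is where the spurious $\prod j!$ in the numerator comes from. You should therefore state the value of $c_{n,t}$ that your computation actually yields rather than asserting that the printed form ``follows''.
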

We will prove this via a generalisation of the Karlin-McGregor formula.

For each $t>0$ and for $\x=(x_1,\ldots,x_n)$ and 
$\y=(y_1,\ldots,y_n)$ in $\Lambda^\circ_n$, define
\begin{equation}\label{tildeZphi}
K_n(t,\x,\y)= p^*_n(t,\x,\y) \E\exp\left(\sum_{i=1}^n\int_0^t \phi(s,X^i_s) ds\right) ,
\end{equation}
where $(X^1_s,\ldots,X^n_s,\ 0\le s\le t),$ denote the trajectories of a collection of non-intersecting 
Brownian bridges which start at positions $x_1,\ldots,x_n$ and end at positions $y_1,\ldots,y_n$
at time $t$, and $p^*_n(t,\x,\y)$ is the transition density of a Brownian motion in $\Lambda_n$ killed 
when it first hits the boundary, given by the Karlin-McGregor formula.

\begin{prop}\label{kmg}
\begin{equation}
K_n(t,\x,\y) = \det [u(t,x_i,y_j)]_{i,j=1}^{n}.
\end{equation}
\end{prop}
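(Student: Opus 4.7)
The plan is to adapt the classical Karlin-McGregor reflection argument so that it accommodates the Feynman-Kac weight. The key observation is that the functional
$$\Phi(B^1,\ldots,B^n)=\exp\left(\sum_{i=1}^n\int_0^t \phi(s,B^i_s)\,ds\right)$$
is invariant under swapping any two labels after any intermediate time: if we form $\tilde B^i,\tilde B^j$ from $B^i,B^j$ by exchanging their values after some time $\tau\in[0,t]$, then the sum of the corresponding $\phi$-integrals is unchanged, since the two integrals over $[\tau,t]$ are merely transposed.

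First I would recast the determinantal right hand side probabilistically. By Feynman-Kac for a single Brownian motion,
$$Z(t,x_i,y_j)\,dy_j=\E\left[\exp\left(\int_0^t\phi(s,B_s)\,ds\right);\,B_t\in dy_j\right]$$
for $B$ a Brownian motion started at $x_i$. Letting $B^1,\ldots,B^n$ be independent such motions with $B^i_0=x_i$,
$$\prod_{i=1}^n Z(t,x_i,y_{\sigma(i)})\,dy_1\cdots dy_n=\E\left[\Phi(B);\,B^i_t\in dy_{\sigma(i)}\text{ for all }i\right],$$
so that
$$\det[Z(t,x_i,y_j)]_{i,j=1}^n\,dy_1\cdots dy_n=\sum_{\sigma\in S_n}\sgn(\sigma)\,\E\left[\Phi(B);\,B^i_t\in dy_{\sigma(i)}\right].$$
On the other hand, starting from the definition \eqref{tildeZphi} and the fact that the law of the non-intersecting system is that of the independent motions conditioned on their endpoints being $y$ without prior collision,
$$\tilde Z_n(t,x,y)\,dy_1\cdots dy_n=\E\left[\Phi(B);\,B^i_t\in dy_i,\,\text{no collisions on }[0,t]\right].$$

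The remaining task is to show that the alternating sum collapses to this non-intersecting identity contribution. On path configurations for which the $B^i$ are pairwise disjoint on $[0,t]$, only $\sigma=\mathrm{id}$ can contribute (non-intersecting paths from $x\in\Lambda_n^\circ$ to the ordered points $y$ cannot permute labels). On the complementary set, let $\tau$ be the first collision time, let $(i,j)$ be the colliding pair singled out by some deterministic tie-breaking rule, and swap the paths $B^i,B^j$ after time $\tau$. By the strong Markov property of independent Brownian motion this is a measure-preserving involution on intersecting configurations; it changes $\sigma$ to $\sigma\cdot(i\,j)$, reversing the sign; and by the invariance noted above it preserves $\Phi$. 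Hence the intersecting contributions cancel pairwise and the identity follows.

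The main obstacle is the measure-theoretic bookkeeping for this involution --- defining the colliding pair unambiguously and verifying the involution property at the level of path space --- but this is entirely classical, and since $\phi$ is Schwartz (so $\Phi$ is bounded on any configuration whose paths lie in a compact set), no convergence issues arise beyond those already present in the standard Karlin-McGregor argument.
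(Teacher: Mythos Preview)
Your argument is correct, and it takes a genuinely different route from the paper's own proof. The paper argues via PDE uniqueness: by Feynman--Kac, $\tilde Z_n$ satisfies $\partial_t \tilde Z_n = \tfrac12 \Delta_y \tilde Z_n + \sum_i \phi(t,y_i)\,\tilde Z_n$ with Dirichlet boundary conditions on $\partial\Lambda_n$ and delta initial data; one then checks that $\det[Z(t,x_i,y_j)]$ solves the same initial-boundary value problem and vanishes at infinity, and concludes by uniqueness. Your approach is instead the classical Karlin--McGregor reflection argument on path space, with the essential observation that the Feynman--Kac weight $\Phi(B)=\exp\bigl(\sum_i\int_0^t\phi(s,B^i_s)\,ds\bigr)$ is invariant under swapping two paths after their first collision time, so the usual sign-reversing involution on intersecting configurations goes through unchanged. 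The PDE route is shorter on the page but relies on a uniqueness statement for parabolic equations on the chamber; your route is more self-contained probabilistically and makes transparent \emph{why} the determinant appears --- it is exactly the symmetry of the potential term $\sum_i\phi(t,y_i)$ (equivalently, of $\Phi$) that allows the Karlin--McGregor cancellation to survive the Feynman--Kac weighting. The measure-theoretic caveats you flag (well-definedness of the first colliding pair, the involution property) are indeed the standard ones, and boundedness of $\phi$ disposes of any integrability concerns.
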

\begin{proof}
According to  the Feynman-Kac formula, $K_n$ satisfies the equation
\begin{equation}\label{sed}
\partial_t K_n = \frac12 \Delta_{\y} K_n + \sum_i \phi(t,y_i) K_n
\end{equation}
with Dirichlet boundary conditions on $\partial \Lambda_n$
and initial condition $K_n(0,\x,\y)=\prod_i\delta(x_i-y_i)$.  
Moreover it is the unique solution to this initial-boundary value 
problem which vanishes as $|\y|\to\infty$ uniformly for $t$ in compact intervals. 
This follows from a variant of the maximum principle 
(see, for example, \cite[Chapter 2, Theorem 2]{f}), 
which applies in this setting since $\phi$ is bounded and continuous.

On the other hand, $\det [Z(t,x_i,y_j)]_{i,j=1}^{n}$
satisfies the same initial-boundary value problem
and vanishes as $|\y|\to\infty$ uniformly for $t$ in compact intervals. 
So the identity follows by uniqueness. 
\end{proof}

We remark that the same argument can be applied to more general expressions than \eqref{tildeZphi} in which the potential $ (s,\x) \mapsto \sum \phi(s, x_i)$ is replaced by a bounded continuous potential $\psi(t,\x)$ which is a symmetric function of the coordinates of $\x$; we will make use of this fact in the proof of Theorem \ref{kmgw} below.

\begin{proof}[Proof of Proposition \ref{nib}.]
It is immediate from the definitions that
$$\frac{Z_n(t,a,b)}{p(t,a,b)^n}=\lim_{\x\to a\1 ,\y\to b\1 } \frac{K_n(t,\x,\y)}{p^*_n(t,\x,\y)}.$$
Now 
$$\lim_{\x\to a\1 ,\y\to b\1 } \frac{p^*_n(t,\x,\y)}{\Delta(\x)\Delta(\y)}=  p(t,a,b)^n t^{-n(n-1)/2} c_n,$$
where $\Delta(\x)=\prod_{i<j} (x_i-x_j)$. This can be inferred, for example, from \cite[Lemma 5.11]{bbo2}).
On the other hand, by Proposition \ref{kmg},
$$\lim_{\x\to a\1 ,\y\to b\1 } \frac{K_n(t,\x,\y)}{\Delta(\x)\Delta(\y)}=c_n^2
\det\left[\partial_a^{i}\partial_b^j u(t,a,b)\right]_{i,j=0}^{n-1}.$$
Given that we have already established that this limit exists, this follows, for example, from~\cite[Theorem 15]{w}), 
where the above formula is given in the case $\x=a\1+\epsilon\delta$ and $\y=b\1+\epsilon\delta$, where 
$\delta=(n-1,\ldots,1,0)$ and $\epsilon\to 0$.

\end{proof}

Define $u_n(t,x,y)$ recursively by $Z_n=u_1 u_2 \cdots u_n$.
\begin{prop}\label{mlshe}
The functions $u_n$ satisfy the coupled system of heat equations
\begin{equation}\label{hs}
\partial_t u_n=\frac12 \partial_y^2 u_n+[\phi(t,y)+ \partial_y^2 \log \left(Z_{n-1}/p^{n-1}\right)] u_n
\end{equation}
with initial conditions $u_n(0,x,y)=\delta(x-y)$ and the convention $Z_0=1$.
\end{prop}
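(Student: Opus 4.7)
The plan is to combine Proposition \ref{nib} with the classical Darboux--Crum transformation of the heat equation with time-dependent potential. Since $\phi$ is $x$-independent, the operator $L_\phi := \partial_t - \tfrac12\partial_y^2 - \phi$ commutes with $\partial_x$; hence each $f_i(t,x,y) := \partial_x^i Z(t,x,y)$ is itself a solution of $L_\phi f_i = 0$ in $(t,y)$. Proposition \ref{nib} then reads $Z_n = c_{n,t}\,W_n$ with $W_n := \det[\partial_y^j f_i]_{i,j=0}^{n-1}$, so $W_n$ is the $y$-Wronskian of $f_0,\ldots,f_{n-1}$.

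The heart of the argument is the Darboux--Crum identity: $\psi_n := W_n/W_{n-1}$ satisfies
$$\partial_t \psi_n = \tfrac12\partial_y^2 \psi_n + \bigl[\phi + \partial_y^2 \log W_{n-1}\bigr]\psi_n.$$
I would prove this by induction on $n$ starting from the elementary Darboux step: if $\psi$ solves $L_\phi\psi = 0$ then $T_\psi\chi := \chi_y - (\partial_y\log\psi)\chi$ maps solutions of $L_\phi$ to solutions of $L_{\phi+\partial_y^2\log\psi}$, a direct calculation using that $\eta := \partial_y\log\psi$ satisfies the Riccati equation $\eta_t = \tfrac12\eta_{yy} + \eta\eta_y + \phi_y$. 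Applying this iteratively with base functions $\psi_1 = f_0 = W_1$, then $\psi_2 = W_2/W_1$, $\ldots$, $\psi_{n-1} = W_{n-1}/W_{n-2}$, and invoking Sylvester's identity to recognize the $k$-fold Darboux image of $f_k$ as $W_{k+1}/W_k$, the intermediate potentials telescope to $\phi + \partial_y^2 \log W_{n-1}$.

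Finally, since $u_n = Z_n/Z_{n-1} = (c_{n,t}/c_{n-1,t})\,\psi_n = (n-1)!\,t^{n-1}\,\psi_n$, the chain rule in $t$ gives
$$\partial_t u_n = \tfrac12\partial_y^2 u_n + \Bigl[\phi + \partial_y^2 \log W_{n-1} + \frac{n-1}{t}\Bigr]u_n.$$
The $y$-independence of $c_{n-1,t}$ gives $\partial_y^2 \log W_{n-1} = \partial_y^2 \log Z_{n-1}$, and a direct computation yields $\partial_y^2 \log p = -1/t$, so $(n-1)/t = -\partial_y^2 \log p^{n-1}$; the bracket thus reduces to $\phi + \partial_y^2 \log(Z_{n-1}/p^{n-1})$ as claimed. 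The initial condition follows from the short-time asymptotic $Z_n(t,x,y) \sim p(t,x,y)^n$, immediate from (\ref{fe}) since the exponential expectation tends to $1$, giving $u_n \sim p(t,x,y) \to \delta(x-y)$. The main obstacle is the Crum induction itself: the single Darboux step is routine, but cleanly telescoping the potentials over $n-1$ steps and identifying the iterated Wronskians with $W_k/W_{k-1}$ via Sylvester's determinant identity takes some care.
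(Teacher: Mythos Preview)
Your proposal is correct and follows essentially the same route as the paper: the authors simply invoke Proposition~\ref{nib} (via Proposition~\ref{kmg}) and cite the literature on Darboux transformations of the heat equation with time-varying potential, while you spell out the Crum iteration and the bookkeeping with $c_{n,t}$ and $\partial_y^2\log p$ explicitly. The initial condition argument is also the same as the paper's (``follows immediately from the definition of $Z_n$'').
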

The equations \eqref{hs} follow from Proposition \ref{nib} together with known properties of Darboux
transformations of solutions to one-dimensional heat equations with time-varying potentials, 
see for example~\cite{ah,ah1}.  For completeness, we will include a direct proof of Proposition~\ref{mlshe}
just after the statement of Proposition~\ref{a-pdes} below.

The coupled heat equations of Proposition \ref{mlshe} are not immediately meaningful if we replace 
the smooth potential $\phi$ by space-time white noise.
However they do suggest that the multi-layer process (in the white noise setting)
$$(\Z_1(t,x,\cdot),\ldots,\Z_n(t,x,\cdot),\ t\ge 0)$$ 
has the Markov property. 
In the following, we introduce a natural extension of the $Z_n$ which will play an important 
role in our understanding of the Markov property when we return to the white noise setting.

Define, for $t>0$ and $\x,\y\in\Lambda_n$,
\begin{equation}
M_n(t,\x,\y) = \frac{K_n(t,\x,\y)}{\Delta(\x)\Delta(\y)}.
\end{equation}
This extends continuously to the boundary of $\Lambda_n\times\Lambda_n$;
by Proposition~\ref{kmg}, for $x\in\R$,
\begin{equation}\label{dx}
M_n(t,x\1 ,\y) = \Delta(\y)^{-1} \det\left[\partial_x^{i-1} u(t,x,y_j)\right]_{i,j=1}^n.
\end{equation}
Rather surprisingly, we will now show that the apparently richer object 
$M_n(t,x\1 ,\cdot)$ is, for a fixed $x\in \R$ and $t>0$, 
given as a function of $$(Z_1(t,x,\cdot),\ldots,Z_n(t,x,\cdot)).$$

Recall from Proposition \ref{nib} that, for $n\ge 2$,
$$Z_n(t,x,y)=c_{n,t} \det\left[\partial_x^{i}\partial_y^j u(t,x,y)\right]_{i,j=0}^{n-1} ,$$
where $c_{n,t} = t^{n(n-1)/2} c_n$.
Let us write 
\begin{equation}\label{tau}
\tau_n(t,x,y)=\det\left[\partial_x^{i}\partial_y^j u(t,x,y)\right]_{i,j=0}^{n-1} .
\end{equation}
For notational convenience, set $\tau_0=Z_0=1$ and $\Lambda_1=\R$.
For $n\ge 1$, define
\begin{equation}\label{a}
a_n=\frac{\tau_{n-1}\tau_{n+1}}{\tau_{n}^2}=\frac{n}{t}\frac{Z_{n-1}Z_{n+1}}{Z_{n}^2}.
\end{equation}
Here we are using the fact that $c_{n-1,t}c_{n+1,t}/c_{n,t}^2=t/n$.

For $\z\in\Lambda_{n-1}$ and $\y\in\Lambda_n$, write $\z\prec \y$ if 
$y_1\ge z_1> y_2\ge\cdots> y_{n-1}\ge z_{n-1}> y_n$.
For $\y\in\Lambda_n^\circ$, denote by $GT(\y)$ the Gelfand-Tsetlin polytope 
$$\{(y^1,y^2,\ldots,y^{n-1})\in
\Lambda_1\times\Lambda_2\times\cdots\times\Lambda_{n-1}:\ \
y^1\prec y^2\prec\cdots\prec y^{n-1}\prec y\}.$$
\begin{thm} \label{if}
For $t>0$, $x\in\R$ and $\y\in\Lambda_n^\circ$,
$$M_n(t,x\1 ,\y)= \Delta(\y)^{-1} \prod_{i=1}^n u(t,x,y_i)
\int_{GT(\y)} \prod_{k=1}^{n-1} \prod_{i=1}^{n-k} a_k(t,x,y^{n-k}_i) dy^{n-k}_i .$$
\end{thm}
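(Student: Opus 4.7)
The plan is to prove the identity by induction on $n$, starting from the determinantal representation (\ref{dx}). The base case $n=1$ is immediate, since the Vandermonde and the Gelfand--Tsetlin product are both empty. For the inductive step, I would factor $Z(t,x,y_j)$ out of the $j$-th column of the determinant in (\ref{dx}), reducing the claim to
\[
\det[g_{i-1}(y_j)]_{i,j=1}^n = J_n(y),
\]
where $g_k(y):=\partial_x^k Z(t,x,y)/Z(t,x,y)$ and $J_n(y)$ denotes the Gelfand--Tsetlin integral on the right-hand side of the theorem.

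The next step is to peel off the outermost level $y^{n-1}$ of the Gelfand--Tsetlin pattern. Performing the column operations ``column $j$ minus column $j+1$'' for $j=1,\dots,n-1$ produces zeros in the top row (which is $g_0\equiv 1$) except in the last column, allowing a Laplace expansion to an $(n-1)\times(n-1)$ determinant with entries $g_i(y_j)-g_i(y_{j+1})=\int_{y_{j+1}}^{y_j}\partial_y g_i(w)\,dw$. Introducing integration variables $w_1,\dots,w_{n-1}$, to be identified with the components of $y^{n-1}$, multilinearity pulls the integrations outside and leaves a determinant in $\partial_y g_i(w_j)$. The $n-1$ factors of $S_1=\partial_y g_1$ that constitute the bottom row of the outer GT pattern emerge naturally from the first row ($i=1$) of this reduced determinant, via the base-case identity $S_1=\partial_y g_1$ (itself coming from Lemma~\ref{sylvester} applied to $Z_1=Z$).

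The main obstacle is identifying the remaining $(n-1)\times(n-1)$ determinant of the $\partial_y g_i(w_j)$ with the inner Gelfand--Tsetlin integral. A direct appeal to the inductive hypothesis fails: the theorem assigns $S_{n-m}$ to row $m$ of the pattern, whereas the inductive hypothesis applied at the next level would contribute $S_{(n-1)-m}$ at the analogous row, a shift by one. To reconcile this, I would use the Darboux relations of Proposition~\ref{mlshe} together with $S_k=\partial_{xy}\log Z_k$ to expand $\partial_y g_i$ in terms of the factors $u_1,\dots,u_i$ and their $x$-derivatives, so that the higher-level $S_k$'s appear at the correct positions in the GT hierarchy. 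The combinatorial reorganization required to match this shifted-index structure with the Gelfand--Tsetlin pattern is, I expect, the main technical hurdle of the proof.
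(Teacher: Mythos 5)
Your outline follows the paper's strategy up to the decisive point and then stops short of it. The reduction to $\det[g_{i-1}(y_j)]_{i,j=1}^n$ with $g_k=\partial_x^kZ/Z$, and the peeling of the outer layer by column differences and multilinearity (equivalently the paper's Lemma~\ref{cb}), are exactly right; so is your diagnosis that the leftover $(n-1)\times(n-1)$ determinant in $\partial_y g_i(w_j)$ is \emph{not} an instance of the inductive hypothesis because of the index shift $S_k\mapsto S_{k+1}$. But the step you defer as ``the main technical hurdle'' is the actual content of the theorem, and the tool you propose for it --- Proposition~\ref{mlshe} --- cannot supply it: that proposition concerns the \emph{time} evolution of the $u_n$ and is logically downstream of the determinantal identities, not an input to them. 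What is needed is a purely algebraic fact about the Wronskian-type determinants $W_m=\det[\partial_x^i\partial_y^jg]_{i,j=0}^{m-1}$, namely Sylvester's (Jacobi's) determinant identity in the form
\begin{equation*}
W_m\,\partial_x^k\partial_yW_m-(\partial_x^kW_m)(\partial_yW_m)=W_{m-1}\,\partial_x^{k-1}W_{m+1},
\qquad\text{i.e.}\qquad
\frac{\partial_y\bigl(\partial_x^kW_m/W_m\bigr)}{T_m}=\frac{\partial_x^{k-1}W_{m+1}}{W_{m+1}},
\end{equation*}
with $T_m=W_{m-1}W_{m+1}/W_m^2$. Applied with $m=1$ this says that once you factor $S_1(w_j)=T_1(w_j)$ out of column $j$, the remaining entries are $\partial_x^{i-1}W_2/W_2$ evaluated at $w_j$ --- the \emph{same} structure you started from, with $g$ replaced by $W_2$ and the top row again identically $1$. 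The index shift you worried about is then automatic, since the $T_k$ built from $W_2$ is precisely the $T_{k+1}$ built from $g$; the peeling step iterates verbatim $n-1$ times and the Gelfand--Tsetlin integral assembles itself with $S_k$ on level $y^{n-k}$. This identity (together with $T_m=\partial_{xy}\log W_m$, which is the $k=1$ case and justifies $S_k=\partial_{xy}\log Z_k$) is the paper's Lemma~\ref{sylvester}; without it, or an equivalent, your induction does not close.

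Two minor points: you will also need the constant identity $c_{n-1,t}c_{n+1,t}/c_{n,t}^2=nt$ to reconcile $S_n=\frac{1}{nt}Z_{n-1}Z_{n+1}/Z_n^2$ with the normalization-free ratio $T_n=W_{n-1}W_{n+1}/W_n^2$ coming from Proposition~\ref{nib}; and the argument is cleaner run as a repeated peeling (as in the paper) than as an induction on $n$, since after one peeling the object is the analogous determinant for the Darboux-transformed kernel rather than for $Z$ itself.
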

In the case $\phi=0$, this reduces to the fact that the volume of $GT(\y)$ 
is proportional to $\Delta(\y)$.  By (\ref{dx}) and Proposition \ref{nib},
this theorem can be seen as a consequence of the next two lemmas.
\begin{lem}\label{cb} 
If $f_1,f_2,\ldots$ is a sequence of continuously
differentiable functions on $\R$ with $f_1\equiv 1$ then
$$\det[f_i(y_j)]_{i,j=1}^n = \int_{z\prec y} \det[f_{i+1}'(z_j)]_{i,j=1}^{n-1} dz_1\ldots dz_{n-1}.$$
\end{lem}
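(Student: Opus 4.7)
The strategy is to reduce the $n\times n$ determinant on the left to an $(n{-}1)\times (n{-}1)$ determinant inside the integral, exploiting the fact that $f_1\equiv 1$ makes the first row constant.

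First I would perform the column operations $C_j\mapsto C_j-C_{j+1}$ for $j=1,\ldots,n-1$, applied in order; each preserves the determinant. Since $f_1\equiv 1$, the resulting first row becomes $(0,\ldots,0,1)$, while the $(i,j)$-entry for $i\ge 2$ and $j<n$ is $f_i(y_j)-f_i(y_{j+1})$. A cofactor expansion along this first row then reduces the determinant to the $(n{-}1)\times(n{-}1)$ determinant
$$\det\bigl[f_i(y_j)-f_i(y_{j+1})\bigr]_{i=2,\ldots,n;\ j=1,\ldots,n-1}.$$

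Next I would invoke the fundamental theorem of calculus entry by entry to write $f_i(y_j)-f_i(y_{j+1})=\int_{y_{j+1}}^{y_j} f_i'(z_j)\,dz_j$, introducing a distinct integration variable $z_j$ for each column.  Because each $z_j$ appears in only one column, the elementary identity $\det\bigl[\int g_{ij}(z_j)\,dz_j\bigr]=\int \det[g_{ij}(z_j)]\,dz_1\cdots dz_{n-1}$ (immediate from the Leibniz expansion of both sides) lets me pull all $n-1$ integrals outside the determinant. The product of intervals $[y_{j+1},y_j]$ for $j=1,\ldots,n-1$ is precisely the interlacing region $\{z:z\prec y\}$, and after relabelling $i\mapsto i+1$ the integrand becomes $\det[f_{i+1}'(z_j)]_{i,j=1}^{n-1}$, yielding the claimed right-hand side.

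The main obstacle is routine but fiddly sign bookkeeping: the cofactor expansion contributes $(-1)^{n+1}$ from the position of the lone $1$, and this needs to be traced carefully. A clean way to absorb it is to perform the mirror operations $C_j\mapsto C_j-C_{j-1}$ for $j=2,\ldots,n$ (carried out in decreasing order) so that the isolated $1$ sits in position $(1,1)$ and cofactor expansion is trivial; the sign introduced by reversing each difference into $\int_{y_j}^{y_{j-1}}f_i'(z)\,dz$ then combines with the orientation of the integration bounds to recover the statement. No analytic input beyond the $C^1$ hypothesis on the $f_i$ is needed.
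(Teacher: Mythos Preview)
Your argument is correct and reaches the same intermediate object as the paper, namely the $(n{-}1)\times(n{-}1)$ determinant $\det[f_{i+1}(y_j)-f_{i+1}(y_{j+1})]_{i,j=1}^{n-1}$, but you traverse it in the opposite direction and with more elementary tools. The paper starts from the right-hand side, writes the interlacing indicator as $\det[1_{y_{j+1}<z_i\le y_j}]$, applies the Cauchy--Binet formula to collapse the integral into $\det[\int_{y_{j+1}}^{y_j} f_{i+1}'(z)\,dz]$, and then (without elaboration) identifies this with $\det[f_i(y_j)]_{i,j=1}^n$. You instead begin on the left, reduce the order by the column subtraction / cofactor step that the paper leaves implicit, and then exploit directly that for $y\in\Lambda_n^\circ$ the region $\{z\prec y\}$ is a product of intervals, so ordinary multilinearity suffices to pull the integrals out. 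Your route avoids invoking Cauchy--Binet (which is overkill here precisely because the domain factorises) and makes the sign bookkeeping explicit; the paper's route, on the other hand, would still work if the interlacing region were described by a non-product determinantal indicator, so it hints at the more general mechanism even though that generality is not needed for this lemma.
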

\begin{proof} Using the formula 
$$1_{\z\prec \y} = \det\left[ 1_{y_{j+1}<z_i\le y_j}\right]_{i,j=1}^{n-1}$$
we have, by a generalisation of the Cauchy-Binet formula (see, for example,\cite{j}),
\begin{align*}
 \int_{\z\prec \y} &\det[f_{i+1}'(z_j)]_{i,j=1}^{n-1} dz_1\ldots dz_{n-1}\\
 &=\int_{\Lambda_{n-1}} \det[f_{i+1}'(z_j)]_{i,j=1}^{n-1}  \det\left[ 1_{y_{j+1}<z_i\le y_j}\right]_{i,j=1}^{n-1} dz_1\ldots dz_{n-1}\\
 &=\det\left[ \int_{y_{j+1}}^{y_j} f_{i+1}'(z) dz\right]_{i,j=1}^{n-1} 
 =\det\left[ f_{i+1}(y_j)-f_{i+1}(y_{j+1}) \right]_{i,j=1}^{n-1} \\
 &=\det[f_i(y_j)]_{i,j=1}^n,
 \end{align*}
 as required.
\end{proof}
\begin{lem}\label{sylvester}
Let $g(x,y)$ be a smooth function and define $W_0=1$, $W_1=g$
and, for $n\ge 2$, $W_n=\det\left[\partial_x^{i}\partial_y^j g(x,y)\right]_{i,j=0}^{n-1}$.
Suppose that $W_n$ is strictly positive for all $n\ge 1$ and define
$T_n=W_{n-1}W_{n+1}/W_{n}^2$.
Then the following identities hold:
$$T_n=\partial_{xy}\log W_n=\partial_y (\partial_y (\cdots \partial_y (\partial_x^n g/g)/T_1)/T_2)\cdots)/T_{n-1}),$$
$$\det[\partial_x^{i-1} g(x,y_j)]_{i,j=1}^n = \prod_{i=1}^n g(x,y_i)
\int_{GT(y)} \prod_{k=1}^{n-1} \prod_{i=1}^{n-k} T_k(x,y^{n-k}_i) dy^{n-k}_i .$$
\end{lem}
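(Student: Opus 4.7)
The lemma bundles three claims: the Hirota-type identity $T_n=\partial_{xy}\log W_n$, the nested recursive formula, and the integral representation. My plan is to derive all three from Sylvester's (Jacobi's) identity for determinants applied to suitable submatrices of the double Wronskian matrix $M_N:=[\partial_x^i\partial_y^j g]_{i,j=0}^{N-1}$ (so that $W_N=\det M_N$), supplemented by iterated use of Lemma~\ref{cb} for part (b).

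The first identity comes from entry-wise differentiation. Applying $\partial_x$ entry-wise to $M_n$ sends row $i$ to row $i+1$, so in $\partial_xW_n$ only the last-row contribution survives, and $\partial_xW_n$ equals the $n\times n$ minor of $M_{n+1}$ obtained by deleting row $n-1$ and column $n$. Analogously, $\partial_yW_n$, $\partial_{xy}W_n$, $W_n$, and $W_{n-1}$ are the minors of $M_{n+1}$ obtained by deleting rows and columns $(\{n\},\{n-1\})$, $(\{n-1\},\{n-1\})$, $(\{n\},\{n\})$ and $(\{n-1,n\},\{n-1,n\})$ respectively, while $W_{n+1}=\det M_{n+1}$. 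Sylvester's identity applied to $M_{n+1}$ with $I=J=\{n-1,n\}$ then yields
\[
W_{n-1}W_{n+1}=W_n\,\partial_{xy}W_n-\partial_xW_n\,\partial_yW_n,
\]
the signed cofactors combining to produce no overall sign; dividing by $W_n^2$ gives $T_n=\partial_{xy}\log W_n$.

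For the nested recursion, introduce $h_0^{(m)}=\partial_x^m g/g$ and $h_k^{(m)}=\partial_y h_{k-1}^{(m)}/T_k$ for $k\ge 1$, and prove by induction on $k$ that
\[
h_k^{(m)}=\frac{1}{W_{k+1}}\det\bigl[\partial_x^{\alpha_l}\partial_y^p g\bigr]_{l,p=0}^{k},
\]
with $\alpha_l=l$ for $l<k$ and $\alpha_k=m$. The inductive step rests on two observations: (i) differentiating such a ``nearly consecutive'' minor in $y$ shifts only the last column's $y$-index from $k$ to $k+1$, because any earlier shift would duplicate an existing column; (ii) Sylvester's identity applied to the $(k+2)\times(k+2)$ matrix built from $x$-indices $(0,\ldots,k,m)$ and $y$-indices $(0,\ldots,k+1)$, with $I=J=\{k,k+1\}$, supplies precisely the quadratic relation expressing the numerator of $h_{k+1}^{(m)}$ in terms of the pieces that arise from $\partial_y h_k^{(m)}$; the denominator $T_{k+1}=W_kW_{k+2}/W_{k+1}^2$ then divides in exactly. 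Specializing $m=n$ and $k=n-1$ gives $h_{n-1}^{(n)}=\partial_xW_n/W_n$, so $\partial_y h_{n-1}^{(n)}=T_n$, which is the nested expression in the lemma.

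For part (b), iterate Lemma~\ref{cb}. Factor $g(x,y_j)$ from column $j$ of the original determinant to obtain a matrix with first row identically $1$; one application of Lemma~\ref{cb} then introduces $y^{n-1}$ and produces an $(n-1)\times(n-1)$ determinant whose first row is $T_1(x,y^{n-1}_j)$, so factoring $T_1$ column-wise restores the unit first row. In general, at stage $k$ the matrix has entries $h_k^{(i+k-1)}(x,y^{n-k}_j)$ with unit first row (since $h_k^{(k)}=W_{k+1}/W_{k+1}=1$ by the specialization $m=k$ of the formula above); Lemma~\ref{cb} produces entries $\partial_y h_k^{(i+k)}=T_{k+1}\,h_{k+1}^{(i+k)}$, and factoring $T_{k+1}$ column-wise restores the unit first row for stage $k+1$. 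After $n-1$ iterations the inner determinant is $h_{n-1}^{(n-1)}=1$, and the accumulated prefactors reproduce $\prod_i g(x,y_i)\prod_{k=1}^{n-1}\prod_{i=1}^{n-k}T_k(x,y^{n-k}_i)$ integrated over $GT(y)$. The main obstacle is the index bookkeeping that keeps the ``nearly consecutive'' minor structure alive across both the recursion and the Cauchy--Binet iteration, ensuring each first row is indeed $1$ after factoring; both reductions are handled by the same induction.
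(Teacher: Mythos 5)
Your proof is correct and follows essentially the same route as the paper: the identities $T_n=\partial_{xy}\log W_n$ and the nested recursion both come from Sylvester's (Jacobi's) determinant identity applied to the bordered Wronskian matrix, and the integral formula follows by iterating Lemma~\ref{cb} with column-wise factoring of $T_k$ to restore a unit first row. Your explicit $h_k^{(m)}$ bookkeeping is a slightly more careful rendering of the paper's $(\partial_x^{m-k}W_{k+1})/W_{k+1}$ notation, but the argument is the same.
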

\begin{proof}  
From well-known properties of Wronskian determinants, $\partial_xW_n$,
$\partial_yW_n$ and $\partial_{xy}W_n$ can be expressed as determinants, namely
$$\partial_xW_n=\det\left[\partial_x^{i}\partial_y^j g(x,y)\right]_{i=0,1,\ldots,n-2,n;\ j=0,\ldots,n-1},$$
$$\partial_yW_n=\det\left[\partial_x^{i}\partial_y^j g(x,y)\right]_{i=0,\ldots,n-1;\ j=0,1,\ldots,n-2,n},$$
$$\partial_{xy}W_n=\det\left[\partial_x^{i}\partial_y^j g(x,y)\right]_{i=0,1,\ldots,n-2,n;\ j=0,1,\ldots,n-2,n}.$$
It follows from Sylvester's determinant identity~\cite[p22]{hj} that
$$W_n\partial_{xy}W_n-(\partial_xW_n)(\partial_y W_n)=W_{n-1}W_{n+1},$$
proving the first identity.  Essentially the same argument shows that, for any $k\ge 1$,
$$W_n\partial_x^k\partial_yW_n-(\partial_x^kW_n)(\partial_yW_n)=W_{n-1}\partial_x^{k-1}W_{n+1}.$$
This implies that
$$(\partial_y(\partial_x^kW_n/W_n))/T_n=(\partial_x^{k-1}W_{n+1})/W_{n+1}.$$
In particular,
$$(\partial_y(\partial_x^ng/g))/T_1=(\partial_x^{n-1}W_2)/W_2,$$
$$(\partial_y(\partial_x^{n-1}W_2/W_2))/T_2=(\partial_x^{n-2}W_3)/W_3,$$
and so on, yielding the second identity.

Now, by Lemma~\ref{cb},
\begin{align*}
\det&\left[\frac{\partial_x^{i-1} g(x,y_j)}{g(x,y_j)}\right]_{i,j=1}^{n}
= \int_{y^{n-1}\prec y} 
\det\left[\partial_{y^{n-1}_j} \frac{\partial_x^i g(x,y^{n-1}_j)}{g(x,y^{n-1}_j)}\right]_{i,j=1}^{n-1}
 \prod_{i=1}^{n-1} dy^{n-1}_i\\
 &= \int_{y^{n-1}\prec y} 
\det\left[ \frac{\partial_{y^{n-1}_j} \frac{\partial_x^i g(x,y^{n-1}_j)}{g(x,y^{n-1}_j)} }{ T_1(x,y_j^{n-1})}\right]_{i,j=1}^{n-1}
 \prod_{i=1}^{n-1} T_1(x,y^{n-1}_i)  dy^{n-1}_i.
 \end{align*}
Applying Lemma~\ref{cb} again, using $T_1=\partial_{y} (\partial_x g/g)$, we obtain
\begin{align*}
\det & \left[ \frac{\partial_{y^{n-1}_j} \frac{\partial_x^i g(x,y^{n-1}_j)}{g(x,y^{n-1}_j)} }{ T_1(x,y_j^{n-1})}\right]_{i,j=1}^{n-1}\\
&= \int_{y^{n-2}\prec y^{n-1}}
\det\left[\partial_{y_j^{n-2}} \frac{\partial_{y^{n-2}_j} \frac{\partial_x^{i+1} g(x,y^{n-2}_j)}{g(x,y^{n-2}_j)} }{ T_1(x,y_j^{n-2})}
\right]_{i,j=1}^{n-2}  \prod_{i=1}^{n-2} dy^{n-2}_i \\
&=\int_{y^{n-2}\prec y^{n-1}}
\det\left[\frac{ \partial_{y_j^{n-2}} \frac{\partial_{y^{n-2}_j} \frac{\partial_x^{i+1} g(x,y^{n-2}_j)}{g(x,y^{n-2}_j)} }{ T_1(x,y_j^{n-2})}}
{T_2(x,y^{n-2}_j)}
\right]_{i,j=1}^{n-2}  \prod_{i=1}^{n-2} T_2(x,y^{n-2}_i) dy^{n-2}_i.
\end{align*}
Now apply Lemma~\ref{cb} again, using $T_2=\partial_{y} (\partial_{y} (\partial_x^2 g/g)/ T_1)$,
and so on, to obtain the third identity.
\end{proof}

Note that, by Lemma~\ref{sylvester}, 
\begin{equation}\label{a-toda} a_n=\partial_{xy}\log \tau_n.\end{equation} 
Thus, if we define, for $n\ge 1$,
$$q_n=\log (\tau_n/\tau_{n-1}) = \log u_n-\log[t^{n-1}(n-1)!],$$ then $a_n=e^{q_{n+1}-q_n}$ 
and, by \eqref{a-toda}, the functions $q_n$ satisfy the 2D Toda equations 
$$\partial_{xy} q_n = e^{q_{n+1}-q_n}-e^{q_n-q_{n-1}},\qquad n\ge 1,$$
with the convention that $q_0=+\infty$. 

The time-evolution of the $a_n$ is given by the following proposition.
\begin{prop}\label{a-pdes}  For $n\ge 1$,
\begin{equation}\label{an-pde}
\partial_t a_n = \frac12\partial_y^2 a_n + \partial_y[a_n\partial_y \log u_n].
\end{equation}
\end{prop}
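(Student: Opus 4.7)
The plan is to reformulate $S_n$ in terms of the quantities $u_n$ that satisfy the heat equations of Proposition~\ref{mlshe}, and then differentiate directly.  Since $Z_n=u_1u_2\cdots u_n$, we have $Z_{n+1}/Z_n=u_{n+1}$ and $Z_{n-1}/Z_n=1/u_n$, so
$$S_n=\frac{1}{nt}\,\frac{u_{n+1}}{u_n}=\frac{v}{nt},\qquad v:=\frac{u_{n+1}}{u_n}.$$
Since $\partial_t(v/(nt))=(\partial_tv)/(nt)-v/(nt^2)$, the target identity is equivalent, after multiplying by $nt$, to the PDE
$$\partial_t v=\tfrac12\,\partial_y^2 v+\partial_y[v\,\partial_y\log u_n]+\frac{v}{t}.$$

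First I would compute $(\partial_t v)/v=(\partial_t u_{n+1})/u_{n+1}-(\partial_t u_n)/u_n$ using the equations of Proposition~\ref{mlshe}.  Writing $V_n=\phi(t,y)+\partial_y^2\log(Z_{n-1}/p^{n-1})$, one gets
$$V_{n+1}-V_n=\partial_y^2\log(Z_n/Z_{n-1})-\partial_y^2\log p=\partial_y^2\log u_n+\frac{1}{t},$$
where the last step uses the fact that $p$ is the standard Gaussian heat kernel, so $\partial_y^2\log p=-1/t$.

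Next I would treat the Laplacian part via the elementary identity, obtained by expanding $u_{n+1}=vu_n$ and dividing by $u_{n+1}$:
$$\frac{\partial_y^2 u_{n+1}}{u_{n+1}}-\frac{\partial_y^2 u_n}{u_n}=\frac{\partial_y^2 v}{v}+2\,\partial_y\log u_n\cdot\partial_y\log v.$$
Combining this with the preceding display yields
$$\frac{\partial_t v}{v}=\tfrac12\frac{\partial_y^2 v}{v}+\partial_y\log u_n\cdot\partial_y\log v+\partial_y^2\log u_n+\frac{1}{t},$$
and multiplying by $v$, using $v\,\partial_y\log v=\partial_y v$ and $\partial_y v\cdot\partial_y\log u_n+v\,\partial_y^2\log u_n=\partial_y[v\,\partial_y\log u_n]$, delivers the PDE above.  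Dividing by $nt$, the $v/t$ term cancels the $-v/(nt^2)$ produced by differentiating the prefactor $1/(nt)$ in $S_n$, giving $\partial_t S_n=\tfrac12\partial_y^2 S_n+\partial_y[S_n\,\partial_y\log u_n]$.

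The main obstacle, such as it is, is purely bookkeeping: one must check that the $+1/t$ coming from $\partial_y^2\log p$ inside $V_{n+1}-V_n$ is precisely what is needed to match the $t$-derivative of the explicit $1/(nt)$ in $S_n$.  Once one has isolated $v=u_{n+1}/u_n$ as the natural variable, everything else is an algebraic manipulation of ratios.
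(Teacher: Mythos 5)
Your proof is correct, but it follows a genuinely different route from the paper's. The paper works with $h_n=\log u_n$, notes $S_n=\partial_{xy}\log Z_n$ (via Lemma~\ref{sylvester}), verifies the base case $S_1=\partial_{xy}h_1$ by applying $\partial_x\partial_y$ to the KPZ-form equation for $h_1$, and then runs an induction on $n$ using $S_{n+1}=S_n+\partial_{xy}h_{n+1}$ together with the identity $\partial_y^2 S_n=\partial_y[S_n(\partial_y h_{n+1}-\partial_y h_n)]$. You instead bypass both the induction and the mixed $\partial_{xy}$ representation entirely: writing $S_n=\frac{1}{nt}\,u_{n+1}/u_n$ and using only the two linear heat equations of Proposition~\ref{mlshe}, you compute the PDE satisfied by the ratio $v=u_{n+1}/u_n$ in one shot, and your bookkeeping checks out --- in particular $V_{n+1}-V_n=\partial_y^2\log u_n+1/t$ (since $\partial_y^2\log p=-1/t$), the product-rule identity for $\partial_y^2(vu_n)/u_{n+1}$, and the cancellation of $v/t$ against the derivative of the prefactor $1/(nt)$ are all correct, and the argument handles $n=1$ uniformly since $Z_0=1$. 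What your version buys is a self-contained, elementary verification for each fixed $n$ that makes completely transparent why the normalization $1/(nt)$ in the definition of $S_n$ is the right one (it is exactly compensated by the $1/t$ coming from the heat kernel); what the paper's version buys is that it keeps the relation $S_n=\partial_{xy}\log Z_n$ and the KPZ-type equations for $h_n$ in the foreground, which is the structure the authors lean on later when discussing the analogous system of SPDEs in the white-noise setting. Both arguments tacitly use strict positivity of the $u_n$ (so that ratios and logarithms make sense), which holds here since each $Z_n>0$, so this is not a gap in either treatment.
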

\begin{proof}[Proof of Propositions \ref{mlshe} and \ref{a-pdes}.]
First we note that the initial condition $u_n(0,x,y)=\delta(x-y)$ follows 
immediately from the definition \eqref{fe} of $Z_n$.
We will verify the equations \eqref{hs} and \eqref{an-pde} simultaneously 
by induction over $n$.
Write $h_n=\log u_n$ and note that \eqref{hs} is equivalent to
$$\partial_t h_n = \frac12 \partial_y^2 h_n + \frac12 (\partial_y h_n)^2
+\phi(t,y)+ \partial_y^2 \log\left(Z_{n-1}/p^{n-1}\right).$$
For $n=1$, the equation \eqref{hs} holds by hypothesis.  Thus
$$\partial_t h_1 = \frac12 \partial_y^2 h_n + \frac12 (\partial_y h_n)^2+\phi(t,y).$$
It follows that $a_1=\partial_{xy}h_1$ satisfies
$$\partial_t a_1 = \frac12\partial_y^2 a_1 + \partial_y[a_1\partial_y h_1],$$
as required.  

Now assume the induction hypothesis (with two parts):
$$\partial_t u_n=\frac12 \partial_y^2 u_n+[\phi(t,y)+ \partial_y^2 \log \left(Z_{n-1}/p^{n-1}\right)] u_n$$
and
$$\partial_t a_n = \frac12\partial_y^2 a_n + \partial_y[a_n\partial_y h_n].$$
Then, using $u_{n+1}= nt a_n u_n$ and $\partial_y^2\log (1/p^n)=n/t$,
\begin{eqnarray*}
\partial_t u_{n+1} &=& n a_n u_n + nt [a_n \partial_t u_n+u_n\partial_t a_n]\\
&=& \frac1t u_{n+1} + nt a_n [\frac12 \partial_y^2 u_n [\phi(t,y)+ \partial_y^2 \log \left(Z_{n-1}/p^{n-1}\right)] u_n]\\
&& + nt u_n [ \frac12\partial_y^2 a_n + \partial_y[a_n\partial_y h_n] ]\\
&=& \frac12\partial_y^2 u_{n+1} + [\phi(t,y)+ \partial_y^2 \log \left(Z_{n-1}/p^{n-1}\right) + \partial_y^2 h_n +\frac1t] u_{n+1}\\
&=& \frac12\partial_y^2 u_{n+1} + [\phi(t,y)+ \partial_y^2 \log \left(Z_{n}/p^{n}\right)] u_{n+1},\\
\end{eqnarray*}
as required.  Note that this implies
$$\partial_t h_{n+1} = \frac12 \partial_y^2 h_{n+1} + \frac12 (\partial_y h_{n+1})^2
+\phi(t,y)+ \partial_y^2 \log\left(Z_n/p^n\right).$$
By \eqref{a-toda} we can write $a_{n+1}=a_n+\partial_{xy} h_{n+1}$.
Thus, using the second part of the induction hypothesis again,
$$\partial_t a_{n+1} = \frac12\partial_y^2 a_{n+1} + \partial_y[a_n\partial_y h_n]
+\partial_y[ (a_{n+1}-a_n) \partial_y h_{n+1} ] +\partial_y^2 a_n.$$
But
$$\partial_y^2 a_n = \partial_y[a_n\partial_y\log a_n] = \partial_y[a_n(\partial_y h_{n+1}-\partial_y h_n)],$$
so we have
$$\partial_t a_{n+1} = \frac12\partial_y^2 a_{n+1} + \partial_y[a_{n+1}\partial_y h_{n+1}],$$
as required.
\end{proof}

\section{Proof of Theorem~\ref{con}}\label{proof-con}

We return now to the white noise setting, denoting by $u(t,x,y)$ the solution to the 
stochastic heat equation (\ref{she1}) with initial condition $u(0,x,y)=\delta(x-y)$.
In this section we will show that for each $n\ge 2$, $Z_n(t,x,y)$
defined by (\ref{woe}) is convergent in $L^2(W)$ or, equivalently,
\begin{equation} \label{squares}
\sum_{k=0}^\infty \int_{\Delta_k(t)} \int_{\R^k}
R^{(n)}_k((t_1,x_1),\ldots,(t_k,x_k))^2 dx_1\cdots dx_k dt_1\cdots dt_k <\infty .
\end{equation}
The first step is to show that this is equivalent to  $\E e^{L} <\infty$, where $L$
is the total intersection local time between two independent copies 
of the system of $n$ non-intersecting Brownian bridges.
Let $(X^i_s,\ 0\le s\le t,\ i=1,\ldots,n)$ 
 be a collection of non-intersecting bridges which all start at $x$ at 
time $0$ and all end at $y$ at time $t$, and let and $(Y^i_s,\ 0\le s\le t,\ i=1,\ldots,n)$ be an independent copy of $X$.   
Define $(L^{ij}_s, 0 \le s\le t)$ to be the semimartingale local time process at $0$ of $(X^i-Y^j)/2$, as defined for
example in~\cite[Chapter VI]{ry}.
The total intersection local time is defined by $L=L_t= \sum_{i,j=1}^n L^{ij}_t$.  
\begin{lem}\label{eL}
In the above notation, $\E e^{L}$ is given by \eqref{squares}.
\end{lem}
\begin{proof}
We show, by induction on $k$, that the $k$th term of \eqref{squares} is equal to $\E[L_t^k]/k!$.

First recall that $R^{(n)}_1((t_1,x_1))$ is the sum over $i$ of the  marginal  probability densities for    each $X^i_{t_1}$ evaluated at $x_1$. Consequently $(R^{(n)}_1((t_1,x_1))^2$  can be expressed as the sum over all $i$ and $j$ of  the joint density of $(X^i_{t_1}, Y^j_{t_1})$ evaluated at $(x_1,x_1)$. Integrating over  $x_1$ and $t_1$ gives an expression which agrees with that for $\E[L]$ given by  the occupation time formula ( see \cite[Chapter 6]{ry}).

Similarly  $R^{(n)}_k((t_1,x_1), \ldots, (t_k,x_k))$ is the sum over $i_1, i_2, \ldots,i_k$ of the  densities of the $(X^{i_1}_{t_1}, \ldots, X^{i_k}_{t_k})$ evaluated at $(x_1, x_2, \ldots,x_k)$. Consequently $$(R^{(n)}_k((t_1,x_1), \ldots, (t_k,x_k)))^2$$  can be expressed as the sum over all $i_1, \ldots,i_k$ and $j_1,\ldots j_k$ of  the joint density of $(X^{i_1}_{t_1},\ldots , X^{i_k}_{t_k}, Y^{j_1}_{t_1}, \ldots, Y^{j_k}_{t_k})$ evaluated at $$(x_1,x_2, \ldots, x_k,x_1, x_2,\ldots,x_k).$$ This is integrated over  all $x_i$ and $t_i$ to give an expression for the $k$th term of \eqref{squares}. On the other hand we can derive the same expression for  $\E[L_t^k]$ by writing $L_t^k= k\int_0^t L^{k-1}_s dL_s$, and evaluating the expectation of this using Proposition 3 and Lemma 1 of \cite{fitz}, together with inductive hypothesis.

\end{proof}

Next  will show that, in fact, all exponential
moments of $L_t$ are finite.   First note that $L_t=A+B$, where $A$ is the intersection
local time on the time interval $[0,t/2]$ and $B$ is the remainder.  Thus, by Cauchy-Schwartz, it suffices
to show that $A$ and $B$  each have finite exponential moments of all orders.
Now, on the time interval $[0,t/2]$,  the joint law of the bridges ${\mathbf X}=(X^1,X^2,\ldots X^n)$ and ${\mathbf Y}=(Y^1,Y^2,\ldots,Y^n)$ is equivalent to that of two independent copies of Dyson Brownian motion with Radon-Nikodym density  
a product of two factors each given by \eqref{bridge} with $s=t/2$, $\x=x\1$, $\y=y\1$.  By Lemma~\ref{qn}, this 
Radon-Nikodym density is a bounded random variable.
A similar statement holds on $[t/2,t]$ after time reversal.  It therefore suffices to show that, for two independent Dyson 
Brownian motions the total intersection local time 
has finite exponential moments of all orders.  This is established as a special case of the following lemma which controls the intersection local time for arbitrary starting points of the Dyson Brownian motions.

\begin{prop}\label{expmoments}
Let ${\mathbf X}$ and ${\mathbf Y}$ be independent Dyson Brownian motions starting from points $X_0={\mathbf u}$ and $Y_0={\mathbf v}$  belonging to $\Lambda_n$. Their total intersection time  $L_t$ has finite exponential moments of all orders for all $t>0$. Moreover for any $\beta>0$, and $\epsilon>0$  one may choose $t>0$ small enough that
that 
$$
\E \bigl[ e^{\beta L_t} \bigr ] < 1+\epsilon
$$
uniformly for all ${\mathbf u},{\mathbf v} \in \Lambda_n$.
\end{prop}
\begin{proof}
The processes ${\mathbf X}$ and ${\mathbf Y}$  satisfy a system of SDEs
$$X^i_t= u_i+\beta^i_t+\sum_{j\ne i} \int_0^t \frac{ds}{X^i_s-X^j_s} ,
\qquad Y^i_t=v_i +\gamma^i_t+\sum_{j\ne i} \int_0^t \frac{ds}{Y^i_s-Y^j_s} ,$$
where $\beta^i$, $\gamma^i$, $i=1,2,\ldots,n$ are a collection
of independent standard one-dimensional Brownian motions. Recall this  holds even if $u$ and $v$ lie on the boundary of the Weyl chamber.  By Cauchy-Schwartz, it suffices to
show that for each distinct pair $i,j$, $L_t^{ij}$ has finite exponential moments
of all orders, each of  which can be bounded arbitrarily close to $1$,  uniformly in $u$ and $v$, by choosing $t$ small.

Recall that $L^{ij}$ is the  local time  process of $(X^i-Y^j)/2$ at zero. 
Consequently Tanaka's formula, see ~\cite[Chapter 6]{ry}, states that,
\begin{eqnarray*}
2L^{ij}_t &=& | X^i_t-Y^j_t | -|u_i-v_j|- \int_0^t \sgn(X^i_s-Y^j_s) d(X^i_s-Y^j_s) \\
&=& | X^i_t-Y^j_t|- |u_i-v_j| - \int_0^t \sgn(X^i_s-Y^j_s) d(\beta^i_s-\gamma^j_s) \\
&& - \int_0^t \sgn(X^i_s-Y^j_s) (D^i_s-E^j_s) ds\\
&\le& | X^i_t-u_i|+|Y^j_t -v_j| + \left| \int_0^t \sgn(X^i_s-Y^j_s) d(\beta^i_s-\gamma^j_s) \right| \\ &&
+ \int_0^t |D^i_s| ds +\int_0^t |E^j_s| ds,
\end{eqnarray*}
where
$$D^i_s=\sum_{j\ne i} \frac{1}{X^i_s-X^j_s},\qquad E^j_s=\sum_{j\ne i} \frac{1}{Y^i_s-Y^j_s}.$$
Thus, it suffices to show that each of the random variables
$$| X^i_t-u_i|, \qquad |Y^j_t -v_j| ,\qquad
\left| \int_0^t \sgn(X^i_s-Y^j_s) d(\beta^i_s-\gamma^j_s) \right|,$$
$$\int_0^t |D^i_s| ds \qquad \mbox{and   }\int_0^t |E^j_s| ds,$$
have finite exponential moments of all orders,  each of  which can be bounded arbitrarily close to $1$,  uniformly in $u$ and $v$, by choosing $t$ small. 

 The third of the above random variables is the absolute
value of a Gaussian random variable with mean zero and variance $2t$ and so the desired property holds straightforwardly.  
(To see this, note that the stochastic integral is a continuous martingale with quadratic variation process $2t$, hence is
a Brownian motion by L\'evy's characterisation theorem~\cite[Chapter IV, Theorem 3.6]{ry}.)

To control the exponential moments of the  first and second of the above random variables, we recall that Dyson's Brownian motion arises as the process of eigenvalues of a Brownian motion  $(H_t,t \geq 0)$ in the space of  $n \times n$ Hermitian matrices.  Then $\tilde{H}_t= H_t-H_0$ defines a Hermitian Brownian motion starting from the zero matrix, and applying Weyl's eigenvalue inqualities to the sum $H_0+ \tilde{H}_t$ we deduce that
$$ |X^i_t-u_i| \leq \sigma_t$$
where $\sigma_t$ is the spectral radius of $\tilde{H}_t$. Since  the Brownian motion $(\tilde{H}_t, t \geq 0)$  can be taken to not depend on ${\mathbf u}$, and the  spectral radius of $\tilde{H}_t$ has exponential moments that approach $1$ as $t$ tends $0$, this gives the desired control for $|X^i_t-u_i|$. The same argument applies of course to $|Y^j_t-v_j|$.

The fourth and fifth random variables are essentially the same, so it remains to show that, for each $i$,
$$\xi_i := \int_0^t |D^i_s| ds$$
has finite exponential moments which can be made arbitrarily close to $1$. 
We will prove this by induction over $i$.
For $i<j$, define
$$\xi_{ij}=\int_0^t \frac{1}{X^i_s-X^j_s} ds.$$
First we note that
$$\xi_1= \int_0^t D^i_s ds = X^1_t-u_1-\beta^1_t,$$
has finite exponential moments which may be bounded as desired.  Now, since 
$$\xi_1=\xi_{12}+\cdots+\xi_{1n}$$
and each term is non-negative, this implies that $\xi_{1j}\le\xi_1$  
and hence that $\xi_{1j}$ has exponential moments satisfying the same bound
for each $j=2,\ldots,n$.  Now
$$\xi_2=\xi_{12}+\xi_{23}+\cdots+\xi_{2n}=\int_0^t D^2_s ds+2\xi_{12}=
X^2_t-u_2-\beta^2_t+2\xi_{12}.$$
Thus $\xi_2$ and $\xi_{23},\ldots,\xi_{2n}$ 
all have exponential moments of all orders which may be bounded as desired.  Similarly,
$$\xi_3=\xi_{13}+\xi_{23}+\xi_{34}+\cdots+\xi_{3n}
=\int_0^t D^3_s ds+2\xi_{13}+2\xi_{23}
=X^3_t-u_3-\beta^3_t+2\xi_{13}+2\xi_{23},$$
the fourth term is handled similarly,
and so on.
\end{proof}

We note that, by the stationarity of space-time white noise, the law of $Z_n(t,x,y)/p(t,x,y)^n$
does not depend on $x,y$ and the above proposition implies that $C_t:=E|Z_n(t,x,y)/p(t,x,y)^n|^2<\infty$.

\section{Karlin-McGregor type formula in the white noise setting} \label{KMG}

For $n=1,2,\ldots$ and $\x,\y\in \Lambda_n^\circ$, define
\begin{align}\label{WOE}
K_n(t,\x,\y) = p^*_n(t,\x,\y)\Big( 1+ \sum_{k=1}^\infty \int_{\Delta_k(t)}& \int_{\R^k}
 R^{(\x, \y)}_k((t_1,x_1),\ldots,(t_k,x_k))\\
&\times \W(dt_1,dx_1)\cdots \W(dt_k,dx_k) \Big),\nonumber
\end{align}
where $R^{(\x,\y)}_k$ is the $k$-point correlation function for a collection of $n$
non-intersecting Brownian bridges started at positions $\x=(x_1,x_2, \ldots x_n)$ and ending at positions $\y=(y_1,y_2,\ldots,y_n)$
 at time $t$.
\begin{prop}
The series (\ref{WOE}) is convergent in $L^2(\W)$.  
\end{prop}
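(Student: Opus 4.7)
The plan is to adapt the proof of Theorem~\ref{con} to the off-diagonal setting where $x,y\in\Lambda_n^\circ$ are distinct. The first step, identical to the diagonal case, is to observe that convergence of the series in $L^2(\W)$ is equivalent to
$$\sum_{k=0}^\infty \frac1{k!}\int_{[0,t]^k}\int_{\R^k} \tilde R^{(n)}_k((t_1,x_1),\ldots,(t_k,x_k))^2\, dx_1\cdots dx_k\, dt_1\cdots dt_k<\infty,$$
and hence to $\E e^{\tilde L}<\infty$, where $\tilde L$ is the total intersection local time between two independent copies $X,Y$ of the non-intersecting bridge system from $x$ to $y$ on $[0,t]$. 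As before, I would aim for the stronger statement that $\tilde L$ has finite exponential moments of all orders.

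Next, I would split $\tilde L=\tilde A+\tilde B$ into the local time on $[0,t/2]$ and on $[t/2,t]$. By time-reversal of the bridge system, $\tilde B$ is distributed as the analogous quantity for the bridge system from $y$ to $x$ on $[0,t/2]$, so Cauchy-Schwarz reduces matters to proving, for arbitrary $x,y\in\Lambda_n^\circ$, that the intersection local time on $[0,t/2]$ between two independent copies of the bridge from $x$ to $y$ has all exponential moments finite.

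I would then compare this bridge system on $[0,t/2]$ to Dyson Brownian motion started at $x$. The bridge law is the Doob $h$-transform of Dyson Brownian motion by $h(z)=p^D(t-s,z,y)$, so its Radon-Nikodym derivative with respect to Dyson Brownian motion on $\sigma(X_s:s\le t/2)$ equals $p^D(t/2,X_{t/2},y)/p^D(t,x,y)$, where $p^D(s,z,w)=(\Delta(w)/\Delta(z))p^*_n(s,z,w)$. The denominator is a positive constant, and the numerator is a bounded function of $X_{t/2}$: the Karlin-McGregor determinant $p^*_n(s,\cdot,y)$ vanishes on $\partial\Lambda_n$ to the same first order as $\Delta$, so $p^D(s,\cdot,y)$ extends continuously to the closure of $\Lambda_n$, and it decays Gaussian-fast at spatial infinity. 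Since exponential integrability is preserved under a bounded change of measure, this reduces the problem to showing that the intersection local time up to time $t/2$ between two independent Dyson Brownian motions started at interior points of $\Lambda_n$ has finite exponential moments of all orders.

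The final step is then handled exactly as in the proof of Theorem~\ref{con}: the SDEs for $U,V$ and the Tanaka decomposition of each $L^{ij}$ are unchanged, and the inductive treatment of $\xi_i=\int_0^{t/2}|D^i_s|\,ds$ via $\xi_i=U^i_{t/2}-x_i-\beta^i_{t/2}+2\sum_{j<i}\xi_{ji}$ proceeds verbatim. Finite exponential moments of $|U^i_{t/2}|$ starting from an interior point are in fact easier to obtain than for the origin entrance law. I expect the main obstacle to be the Radon-Nikodym boundedness step in the previous paragraph, which rests on the precise order of vanishing of $p^*_n$ along $\partial\Lambda_n$; once that is in hand, everything else is a direct transcription of the argument from the diagonal case.
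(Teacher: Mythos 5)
Your argument is correct, but it is considerably heavier than the paper's proof, and the difference comes precisely from the hypothesis $x,y\in\Lambda_n^\circ$ that you never exploit. Because the starting points and the ending points are distinct, the non-intersecting bridge system is just $n$ independent Brownian bridges conditioned on the non-intersection event, which has \emph{positive} probability; so after the replica identity the paper only needs $\E[e^{L}1_A]<\infty$, with $L$ the intersection local time of two independent families of \emph{independent} bridges and $A$ the non-intersection event. H\"older's inequality over the $n^2$ pairwise intersection local times then reduces everything to the local time at zero of a single rescaled Brownian bridge, which is Rayleigh distributed and hence has all exponential moments --- a three-line proof. Your route instead transports the whole machinery of Theorem~\ref{con} (splitting at $t/2$, time reversal, Doob $h$-transform comparison with Dyson Brownian motion, Tanaka's formula and the inductive bound on $\int_0^{T}|D^i_s|\,ds$) to the off-diagonal case, and the steps do go through: the Radon--Nikodym density of the bridge law on $\sigma(X_s,\ s\le t/2)$ with respect to Dyson Brownian motion started at $x$ is indeed the ratio of $h$-transformed kernels you write down, its boundedness follows from the continuous positive extension of $p^*_n(s,\cdot,y)/\Delta(\cdot)$ to the closed chamber (including the higher-codimension strata where several coordinates coincide) together with Gaussian decay at infinity, and the SDE/Tanaka induction is insensitive to the (interior) starting point. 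What you lose is economy: you re-prove, in the easy regime, estimates that were only needed in Theorem~\ref{con} because there all bridges start and end at a single point, so that the conditioning event is degenerate and the paper's short argument is unavailable. What you gain is a proof that does not rest on the Rayleigh tail bound and that would degenerate gracefully to the coincident-endpoint case; but for the statement as given, the paper's conditioning-plus-H\"older argument is the natural one.
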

\begin{proof}  We need to show that $\E[e^L1_A] <\infty$ where $L$ is the total
intersection local time between two independent sets of $n$ independent Brownian 
bridges started at positions $\x$ and ending at positions $\y$ at time $t$, and $A$ is 
the event that each set is non-intersecting.  So it suffices to show that $\E e^L <\infty$.
By considering pairwise intersection local times and applying H\"older's inequality 
one obtains $\E  e^L \le \E e^{n^2\sqrt{t/2}R}$ where $R$ is the local time at zero
of a standard Brownian bridge on $[0,1]$, which has the Rayleigh distribution
$P(R>r)=e^{-r^2/2}$, $r>0$ (see, for example,~\cite{pitman}).
\end{proof}

In fact, Proposition~\ref{expmoments} yields the following stronger statement.
\begin{prop}\label{K-L2} For each $t>0$, there is a constant $d_t<\infty$ such that for all $\x,\y\in\Lambda_n^\circ$,
$$E |K_n(t,\x,\y)|^2 \le d_t p_n^*(t,\x,\y)\Delta(\x)\Delta(\y).$$
\end{prop}
\begin{proof} As in the proof of Theorem~\ref{con}, let $L$ be the 
 total intersection local time between two independent copies 
of the system of $n$ non-intersecting Brownian bridges
started at positions $\x$ and ending at positions $\y$ at time $t$.
Then (cf. Lemma~\ref{eL})
$$E |K_n(t,\x,\y)|^2=p_n^*(t,\x,\y)^2 Ee^L.$$
Write $L_t=A+B$, where $A$ is the total intersection
local time on the time interval $[0,t/2]$ and $B$ is the remainder. 
By Cauchy-Schwartz,
$$\E e^L\le \big(\E e^{2A}\big)^{1/2} \big(\E e^{2B}\big)^{1/2}.$$
Now, as explained in Section 2,
$$\E e^{2A}=\hat \E\left[ \frac{q_n(t/2,{\bf X}_{t/2},\y)}{q_n(t,\x,\y)} e^{2A} \right],$$
where $\hat E$ denotes the expectation with respect to a Dyson Brownian motion started at $\x$.
By Lemma~\ref{qn} and Proposition~\ref{expmoments} this is bounded by $d_t/q_n(t,\x,\y)$
where $d_t$ is a constant independent of $\x,\y$.  The second term is treated similarly,
and the statement of the proposition follows.
\end{proof}

Before proceeding to the Karlin-McGregor formula, which is the main result of 
this section, we recall the approach of Bertini and Cancrini to the stochastic heat  equation.
 In \cite{bc} these authors make sense of the formal Feynman-Kac representation (\ref{gwf}) by 
 means of smoothing the white noise. For $\kappa>0$ introduce the mollified white noise $W^\kappa$ defined by
$$
W^\kappa(t,x)= \int_0^t \int_{\R} \delta_\kappa(x-y) W(ds,dy),
$$
where $\delta_\kappa(\cdot)$ is the centered Gaussian density of variance $1/\kappa$. 
Then the analogue of (\ref{gwf}) is then meaningful and defines random variables $u^\kappa(t,x,y)$.
 
 Moreover, for each $(t,x,y)$, and any $p\geq 1$,
\begin{equation}\label{conofmol}
u^\kappa(t,x,y) \rightarrow u(t,x,y) \text{ in } L^p(W) \text{ as } \kappa \rightarrow \infty.
\end{equation}

\begin{thm}\label{kmgw}
For $\x,\y\in \Lambda_n^\circ$, $K_n(t,\x,\y) = \det [u(t,x_i,y_j)]_{i,j=1}^n$.
\end{thm}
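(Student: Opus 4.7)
The strategy is to apply the $S$-transform to both sides and use its injectivity on $L^2(\W)$. By reading off the chaos expansion \eqref{WOE} (substituting $\phi(t_i,x_i)\,dt_i\,dx_i$ for $\W(dt_i,dx_i)$), the $S$-transform of $\tilde\Z_n$ equals its smooth-potential counterpart \eqref{tildeZphi}:
\[
S(\tilde\Z_n(t,x,y))(\phi)=p^*_n(t,x,y)\,\E^{NB}\!\left[\exp\!\left(\sum_{i=1}^n\int_0^t\phi(s,U^i_s)\,ds\right)\right]=\tilde Z_n(t,x,y),
\]
where $U=(U^1,\ldots,U^n)$ denotes non-intersecting Brownian bridges from $x$ to $y$ under $\E^{NB}$; this is the direct analogue of \eqref{S} for $\tilde\Z_n$. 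It therefore suffices to show $S(\det[\Z(t,x_i,y_j)])(\phi)=\tilde Z_n(t,x,y)$.

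Expanding the determinant as $\sum_\sigma\sgn(\sigma)\prod_i\Z(t,x_i,y_{\sigma(i)})$ and invoking linearity of $S$, the key ingredient is the moment formula
\[
S\bigl(\textstyle\prod_i\Z(t,x_i,y_{\sigma(i)})\bigr)(\phi)=\prod_ip(t,x_i,y_{\sigma(i)})\,\E^{B^\sigma}\!\!\left[\exp\!\left(\sum_{i<j}L_{ij}+\sum_i\int_0^t\!\phi(s,B^{i,\sigma(i)}_s)\,ds\right)\right],
\]
where, under $\E^{B^\sigma}$, the $B^{i,\sigma(i)}$ are independent Brownian bridges from $x_i$ to $y_{\sigma(i)}$ and $L_{ij}$ is their pairwise intersection local time. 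This identity is obtained by expanding each $\Z(t,x_i,y_{\sigma(i)})$ via \eqref{chaos}, multiplying out, and using the product-of-Wick-exponentials formula $\prod_i\exp^\w(\W(\psi_i))=\exp^\w(\W(\sum_i\psi_i))\cdot e^{\sum_{i<j}\langle\psi_i,\psi_j\rangle}$ with $\psi_i$ the occupation density of $B^{i,\sigma(i)}$ (noting $\langle\psi_i,\psi_j\rangle=L_{ij}$); equivalently, it follows from the Cameron-Martin shift $\W\mapsto\W+\phi$ in the SHE satisfied by each factor.

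The integrand in the moment formula is symmetric under relabeling of the paths, so the classical Karlin-McGregor path-swapping bijection---swap the paths after their last meeting time, which preserves both the symmetric potential integral and the intersection local time because $|B^i-B^j|$ is pointwise unchanged---cancels all intersecting contributions in the signed sum over $\sigma$; only the non-intersecting identity term survives, on which $L_{ij}=0$ for all $i<j$. This gives $S(\det[\Z])(\phi)=p^*_n(t,x,y)\,\E^{NB}[\exp(\sum_i\int_0^t\phi(s,U^i_s)\,ds)]=\tilde Z_n(t,x,y)$, matching the first display, and injectivity of $S$ delivers $\det[\Z]=\tilde\Z_n$ in $L^2(\W)$. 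The main obstacle is making the moment formula precise: the space-time occupation density $\psi_i$ of a Brownian bridge is a distribution rather than an $L^2$ function, so the Wick-exponential manipulations cannot be performed naively. The cleanest resolution is to work at the level of chaos kernels---verifying the needed identity term-by-term via a determinantal combinatorial identity on the correlation functions $R^{(1),ij}_k$ and $\tilde R^{(n)}_k$---or to approximate $\dot\W$ by the smooth potential $\phi$, invoke Proposition~\ref{kmg} in that setting, and pass to the limit.
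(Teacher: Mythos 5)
Your proposal is correct, and its first half is the same as the paper's: both apply the $S$-transform (multiply by $\exp^\w(\W(\phi))$ and take expectations), identify $S(\tilde\Z_n)(\phi)$ with the smooth-potential kernel $\tilde Z_n$ of \eqref{tildeZphi}, and rest on the same moment formula for $S\bigl(\prod_i\Z(t,x_i,y_{\sigma(i)})\bigr)(\phi)$ --- the paper's $V_n(t,\sigma x,y)$ --- which the paper justifies exactly as you suggest at the end, by multiplying chaos expansions term by term with the $L^p$ bounds of Bertini--Cancrini, rather than via Wick exponentials of occupation densities (which, as you note, are not honest $L^2$ functions). Where you genuinely diverge is the cancellation step. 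The paper treats $C_n=\sum_\sigma\sgn(\sigma)V_n(t,\sigma x,y)$ analytically: by Feynman--Kac each $V_n(t,\sigma x,\cdot)$ solves the integral equation with potential $\phi$ plus the attractive interaction $\sum_{i<j}\delta(y_i-y_j)$; antisymmetry makes $C_n$ vanish on $\partial\Lambda_n$, so the interaction term drops out, and $C_n$ is identified with $\tilde Z_n$ by uniqueness of the Dirichlet initial-boundary value problem, which requires the decay estimate $|V_n|\le p_n\,e^{CnT}\,\E e^{L_t}$ as $|y|\to\infty$. You instead run a Karlin--McGregor path-swapping involution directly on the signed sum, using that the weight $\exp\bigl(\sum_{i<j}L_{ij}+\sum_i\int_0^t\phi(s,B^i_s)\,ds\bigr)$ depends only on the unlabelled collection of trajectories, so intersecting configurations cancel in sign-reversed pairs and only the non-intersecting identity term (on which all $L_{ij}=0$) survives, giving $p^*_n(t,x,y)\,\E^{NB}\bigl[e^{\sum_i\int_0^t\phi(s,U^i_s)ds}\bigr]=\tilde Z_n(t,x,y)$ at once. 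Your route buys a shorter, purely probabilistic argument that bypasses the uniqueness and decay analysis; the price is making the involution rigorous in the presence of the unbounded local-time weight: you need integrability (the exponential moments of intersection local time established in Sections 2 and 4 suffice), the almost-sure absence of triple points so the swap time and swapped pair are well defined, and the verification that the swap preserves the unnormalised pinned (bridge) measure --- and it is cleaner to swap at the \emph{first} meeting time of a canonically chosen pair than ``after the last meeting time'', though either can be made to work. These are routine ingredients of the Karlin--McGregor argument, so I see no substantive gap; it is a valid alternative to the paper's PDE-uniqueness argument built on the same $S$-transform reduction and moment formula.
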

\begin{proof}
Let $\phi\in E$, multiply both sides by $ \exp^\w \W(\phi)$ and
take expectations.  The lefthand side becomes $K^\phi_n(t,\x,\y)$, defined
earlier by the Feynman-Kac expression (\ref{tildeZphi}). The righthand side becomes 
$$C_n(t,x,y):=\E \left[ \det [u(t,x_i,y_j)]_{i,j=1}^n \exp^\w (\W(\phi)) \right],$$
which will we now argue is also given by (\ref{tildeZphi}), and since $\phi \in E$ is arbitrary 
the statement of the theorem will follow.

Consider the quantity
$$C^\kappa_n(t,\x,\y):=\E \left[ \det [u^\kappa(t,x_i,y_j)]_{i,j=1}^n \exp^\w (\W(\phi)) \right]. $$ Replacing each $u^\kappa(t,x_i,y_j)$ by its Feynman-Kac representation, using Fubini, and integrating over $W$ we obtain
$$ C^\kappa_n(t,\x,\y)=\sum_{\sigma} \sgn(\sigma) p_n(t,\x,\sigma\y)\E \exp \left(  \int^t_0 \psi^\kappa(s, {\mathbf B}^{\sigma}_s)   ds \right)$$
where for each permutation $\sigma$,  ${\mathbf B}^{\sigma}$ is  a  bridge of standard $n$-dimensional Brownian motion starting from $\x$ and ending at $\sigma \y=(y_{\sigma(1)}, \ldots,y_{\sigma(n)})$, and $\psi^\kappa$ is given by 
$$
\psi^\kappa(s, \z)= \sum_{i=1}^n \phi^k(s,z_i) +\sum_{i<j} \delta_{\kappa/2}(z_i-z_j)
$$
with
$$\phi^\kappa(s,z)= \int_{\R} \delta_{\kappa}(z^\prime-z)\phi(s,z^\prime) dz^\prime.
$$
Because $\psi^\kappa$ is invariant under permutations of the coordinates, 
as remarked earlier the argument for the Karlin-McGregor formula of Proposition \ref{kmg} 
allows this to  be rewriten as 
\begin{equation}\label{FKmol}
C^\kappa_n(t,\x,\y)=p^*_n(t,\x,\y) \E\exp\left( \int_0^t \psi^\kappa(s,{\mathbf X}_s) ds\right) ,
\end{equation}
where  ${\mathbf X}=(X^1_s,\ldots,X^n_s, 0\leq s \leq t)$ denotes a collection of non-intersecting 
Brownian bridges which start at positions $\x=(x_1,\ldots,x_n)$ and end at positions $\y=(y_1,\ldots,y_n)$
at time $t$, and $p^*_n(t,\x,\y)$ is the transition density of a Brownian motion in $\Lambda_n$ killed 
when it first hits the boundary.

We now let $\kappa$ tend to infinity. On the one hand, from their definitions and (\ref{conofmol}), we have
\[
C^\kappa_n(t,\x,\y) \rightarrow C_n(t,\x,\y).
\]
On the other hand, since $\phi^\kappa$ converges uniformly to $\phi$, and ${\mathbf X}$ does not visit the boundary of $\Lambda_n$,  the righthand side of equation (\ref{FKmol}) converges to that of (\ref{tildeZphi}).  
This is justified by an application of the Dominated Convergence Theorem using the fact that
$\sup_{x\in \R} l^x_t,
$ where $l^x_t$ denotes the local time of a one-dimensional Brownian motion at level $x$, has finite exponential moments.
\end{proof}

It is known \cite{bc} that for each $x$ the solution to the stochastic equation
$u(t,x,y)$ admits a version that is almost surely continuous in $t$ and
$y$ and moreover is strictly positive. It follows from the above theorem that
for each $\x\in\R^n$, $K_n(s,\x,\z)$ admits a version that is almost surely 
continuous in $t$ and $\z$. Define $K_n(t,\z,\y;s)$ via the chaos expansion (\ref{WOE})
but with the shifted white noise $\dot\W(s+\cdot,\cdot)$.  For each $\y\in\R^n$,
this similarly admits a version that is almost surely continuous in $t$ and $\z$. 
In the following we assume that we are using these versions.  
\begin{cor}\label{K-markov}
For each $\x,\y\in\Lambda_n^\circ$, 
$$K_n(s+t,\x,\y)=\int_{\Lambda_n} K_n(s,\x,\z) K_n(t,\z,\y;s) d\z,$$
almost surely.
Consequently, for each $\x\in\Lambda^\circ_n$, $K_n(t,\x,\cdot),\ t> 0$ is a Markov process taking values in  $C(\Lambda_n^\circ,\R)$.
\end{cor}
\begin{proof} This follows from Theorem~\ref{kmgw} using the (generalised)
Cauchy-Binet formula~\cite{j} together with the corresponding flow property for the solution 
of the stochastic heat equation, namely that for each $x,y\in\R$,
$$u(s+t,x,y)=\int_{\R} u(s,x,z) u(t,z,y;s) dz$$
almost surely.  
\end{proof}

We conclude this section with the following.

\begin{prop}\label{kmg-she}
For each $\x,\y\in\Lambda_n^\circ$, $K_n(t,\x,\y)\ge 0$ almost surely.
\end{prop}
\begin{proof}  In the above notation, we first claim that for each 
$\x,\y\in\Lambda_n^\circ$ and $\kappa>0$, it holds almost surely that
$$\det [u^\kappa(t,x_i,y_j)]_{i,j=1}^n>0.$$
To see this, we use the Feyman-Kac representation, from \cite[(2.17)]{bc},
$$u^\kappa(t,x,y)=p(t,x,y)\E F^\kappa_t(b)$$
where the expectation is with respect to a Brownian bridge $b$
starting at $x$ and ending at $y$ at time $t$, and $F^\kappa_s(b), 0\le s\le t$ 
is an almost surely continuous, strictly positive, multiplicative functional of $b$.
It follows, by a standard path-switching argument 
(see, e.g.,~\cite[Section 1.2]{joc}), that 
\begin{equation}\label{xx}
\det [u^\kappa(t,x_i,y_j)]_{i,j=1}^n=p^*_n(t,\x,\y)\E \prod_{i=1}^n F^\kappa_t(X_i)
\end{equation}
where the expectation is with respect to a collection of $n$ non-intersecting 
Brownian bridges $(X_1,\ldots,X_n)$ started at positions $\x$ and ending at 
positions $\y$ at time $t$.  Indeed, multiplying both sides of \eqref{xx} by $1_A(\y)$, where
$A$ is a measurable subset of $\Lambda_n$, and integrating with respect to
$\y\in\Lambda_n$, gives
\begin{equation}\label{xx1}
\sum_{\sigma\in S_n} (-1)^\sigma \E\big[\prod_{i=1}^n F^\kappa_t(B_i);\ B(t)\in \sigma A\big]
=  \E\big[\prod_{i=1}^n F^\kappa_t(B_i);\ B(t)\in A;\ T>t\big],
\end{equation}
where $B$ is a standard Brownian motion in $\R^n$ started at $\x$ and 
$T$ is the first exit time of $B$ from $\Lambda_n$.  
To prove \eqref{xx} it suffices
to show that \eqref{xx1} holds for every measurable  $A\subset\Lambda_n$.
Let us write
$$\Gamma(B)=\prod_{i=1}^n F^\kappa_t(B_i)$$
and note that \eqref{xx1} is equivalent to
$$\sum_{\sigma\in S_n} (-1)^\sigma \E\big[\Gamma(B);\ B(t)\in \sigma A;\ T\le t\big] =0.$$
Now, 
\begin{align*} 
\sum_{\sigma\in S_n}& (-1)^\sigma \E\big[\Gamma(B);\ B(t)\in \sigma A;\ T\le t\big] \\
&= \sum_{i=1}^{n-1} \sum_{\sigma\in S_n} (-1)^\sigma \E\big[\Gamma(B);\ B(t)\in \sigma A;\ T=T_i\le t\big] 
\end{align*}
where $T_i=\inf\{t\ge 0:\ B_i(t)=B_{i+1}(t)\}$, so it suffices to show that
$$ \sum_{\sigma\in S_n} (-1)^\sigma \E\big[\Gamma(B);\ B(t)\in \sigma A;\ T=T_i\le t\big] =0$$
for each $i=1,\ldots,n-1$.  Fix $i$ and define 
$$\tilde B(t)=\begin{cases} B(t) & t\le T_i\\ s_i B(t) & t\ge T_i\end{cases}$$
where $s_i$ denotes the adjacent transposition $(i,i+1)$.  By the strong Markov property,
$\tilde B$ has the same law as $B$.  Moreover, since $F^\kappa_t$ is a multiplicative 
functional, we also have $\Gamma(\tilde B)=\Gamma(B)$.
Hence,
\begin{align*} \E\big[\Gamma(B);\ B(t)\in \sigma A;\ T=T_i\le t\big] 
&= \E\big[\Gamma(\tilde B);\ \tilde B(t)\in s_i \sigma A;\ T=T_i\le t\big] \\
&= \E\big[\Gamma(B);\ B(t)\in s_i \sigma A;\ T=T_i\le t\big] 
\end{align*}
and it follows that
\begin{align*}
 \sum_{\sigma\in S_n} & (-1)^\sigma \E\big[\Gamma(B);\ B(t)\in \sigma A;\ T=T_i\le t\big] \\
& =  \sum_{\sigma\in S_n} (-1)^\sigma \E\big[\Gamma(B);\ B(t)\in s_i \sigma A;\ T=T_i\le t\big] \\
& = - \sum_{\sigma\in S_n} (-1)^{s_i\sigma} \E\big[\Gamma(B);\ B(t)\in s_i \sigma A;\ T=T_i\le t\big] \\
&= -  \sum_{\sigma\in S_n}  (-1)^\sigma \E\big[\Gamma(B);\ B(t)\in \sigma A;\ T=T_i\le t\big],
\end{align*}
as required.
The result now follows from \eqref{xx}, letting $\kappa\to\infty$.
\end{proof}

\section{On the evolution of the $Z_n$ in the white noise setting}\label{evol}

In this section we discuss the analogue of Theorem \ref{if} in the white
noise setting, and the implication that 
$(\Z_1(t,x,\cdot),\ldots,\Z_n(t,x,\cdot)),\ t\ge 0$ is Markov.

We expect, but will not prove here, that for each $t>0$,
\[
M_n(t,\x,\y)=\frac{K_n(t,\x,\y)}{\Delta(\x)\Delta(\y)}
\]
 has a version which almost surely extends continuously to a strictly
positive function on
$\Lambda_n\times\Lambda_n$.  In particular, for each $t>0$, almost surely,
\begin{equation}\label{cty}
\Z_n(t,a,b) = c_{n,t}  \lim_{\x\to a\1 , \y\to b\1 } M_n(t,\x,\y),
\end{equation}
uniformly on compact intervals.
Assuming this continuity it can be shown
that the analogue of Theorem~\ref{if} holds in the white-noise setting,
that is, if we set $\Z_0=1$ and define, for $n\ge 1$, 
$$ a_n=\frac{n}{t}\frac{\Z_{n-1}\Z_{n+1}}{\Z_{n}^2},$$ 
then, for $t>0$, $x\in\R$ and $\y\in\Lambda_n^\circ$,
\begin{equation}\label{wif}
M_n(t,x\1 ,\y)= \Delta(\y)^{-1} \prod_{i=1}^n u(t,x,y_i) 
\int_{GT(\y)} \prod_{k=1}^{n-1} \prod_{i=1}^{n-k}  a_k(t,x,y^{n-k}_i)
dy^{n-k}_i .
\end{equation}
It is not difficult to see (from the flow property described in Corollary~\ref{K-markov}
of the previous section)
that for each $x\in\R$ and for each $n$, the process 
$$(M_1(t,x\1,\cdot),\ldots,M_n(t,x\1 ,\cdot)),\qquad t\ge 0$$ has the Markov
property. Assuming the validity the formulas (\ref{cty}) and (\ref{wif}) 
this would imply that $(\Z_1(t,x,\cdot),\ldots,\Z_n(t,x,\cdot)),\ t\ge 0$ 
is a Markov process.

A proof of the existence of an almost surely continuous extension for
$M_n(t,\x,\y)$ based on Kolmogorov's criterion would be long and
technical.
Here we will satisfy ourselves with a continuous extension in  $L^2$,
which then allows us to prove (\ref{wif}), and hence the Markov property
of the multi-layer process, in the special case $n=2$.

\begin{lem}\label{A}
For each $t>0$,
\[
M_n(t,\x,\y)=\frac{K_n(t,\x,\y)}{\Delta(\x)\Delta(\y)}
\]
extends continuously in $L^2(\W)$ to $\Lambda_n\times\Lambda_n$.  Moreover
this extension satisfies
\[
\Z_n(t,x,y) = c_{n,t}  M_n(t,x\1 ,y\1 ).
\]
\end{lem}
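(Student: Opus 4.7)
My plan is to work at the level of chaos kernels and identify the limit directly. From the defining series (\ref{WOE}), the $k$-th chaos kernel of $\tilde\Z_n(t,x,y)$ is $p_n^*(t,x,y)\,\tilde R_k^{(n)}((t_1,x_1),\ldots,(t_k,x_k);x,y)$, so the $k$-th chaos kernel of $\hat\Z_n(t,x,y)$ is
\[
g_k(x,y)=\frac{p_n^*(t,x,y)}{\Delta(x)\Delta(y)}\,\tilde R_k^{(n)}((t_1,x_1),\ldots,(t_k,x_k);x,y).
\]
A direct Cauchy--Binet expansion of $\det[p(t,x_i,y_j)]$ against the Vandermonde factors shows that $p_n^*(t,x,y)/(\Delta(x)\Delta(y))$ extends to a jointly smooth, strictly positive function on $\R^n\times\R^n$, taking the value $p(t,a,b)^n/c_{n,t}$ at $(\hat a,\hat b)$ (as already used in the proof of Proposition~\ref{nib}). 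The correlation function $\tilde R_k^{(n)}$, given by a Karlin--McGregor-style determinant in the Brownian bridge kernels, depends continuously on the endpoints in the $L^2$-norm over $\Delta_k(t)\times\R^k$ and, as $(x,y)\to(\hat a,\hat b)$, converges to the correlation function $R_k^{(n)}$ of the system of non-intersecting Brownian bridges all starting at $a$ and all ending at $b$. Thus for each $k$, $g_k(x,y)\to(p(t,a,b)^n/c_{n,t})R_k^{(n)}$ in $L^2(\Delta_k(t)\times\R^k)$.

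To promote this termwise convergence to convergence in $L_2(\W)$, the plan is to establish a uniform bound
\[
\sup_{(x,y)\in K}\sum_{k\ge 0}\frac{1}{k!}\int_{[0,t]^k\times\R^k}g_k(x,y)^2\,dt_1\,dx_1\cdots dt_k\,dx_k<\infty
\]
on a compact neighbourhood $K$ of $(\hat a,\hat b)$ in $\Lambda_n\times\Lambda_n$. Finiteness at the coincident point is exactly Theorem~\ref{con}. The uniform bound on $K$ can be obtained by rerunning the intersection local-time argument of that proof with two independent systems of non-intersecting Brownian bridges whose endpoints vary in $K$: the Dyson-type Radon--Nikodym reduction and the subsequent Cauchy--Schwarz/Tanaka estimates on exponential moments of the intersection local time can be made uniform in the endpoints. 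Combined with the termwise $L^2$-convergence of the kernels, dominated convergence then yields the claimed $L_2(\W)$-continuous extension of $\hat\Z_n$ to $\Lambda_n\times\Lambda_n$.

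The identification at $(\hat a,\hat b)$ is then a matter of matching chaos expansions: reading off (\ref{woe}), the $k$-th chaos kernel of $c_{n,t}^{-1}\Z_n(t,a,b)$ is exactly $(p(t,a,b)^n/c_{n,t})R_k^{(n)}$, which coincides with the limit of $g_k(x,y)$ computed above, giving $c_{n,t}\hat\Z_n(t,\hat a,\hat b)=\Z_n(t,a,b)$ in $L_2(\W)$. The main obstacle is precisely the uniform-in-endpoints summability displayed above: because $\Z(t,x,y)$ itself is not $L_2(\W)$-differentiable in its spatial arguments, the divided-difference cancellations encoded in the determinantal structure of $\tilde\Z_n$ (equivalently, the non-intersection of the underlying bridges) are essential, and the local-time estimates of Theorem~\ref{con} must be extended to allow for non-coincident and perturbed bridge endpoints in order to supply the dominating bound needed to sum the chaos series.
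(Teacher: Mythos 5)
Your route is genuinely different from the paper's. You argue chaos level by chaos level (termwise $L^2$ convergence of the kernels $\frac{p^*_n}{\Delta\Delta}\tilde R^{(n)}_k$ as the endpoints degenerate) and then try to sum the series using a dominating bound uniform over a neighbourhood of $(\hat a,\hat b)$. The paper instead proves continuity of the covariance map $(x,y,x',y')\mapsto \E\bigl[\hat\Z_n(t,x,y)\,\hat\Z_n(t,x',y')\bigr]$, written as a Vandermonde-normalized prefactor times $\E[e^L]$ for two independent systems of non-intersecting bridges, and handles the degeneration of the endpoints by splitting $L=L_{[0,\delta]}+L_{[\delta,t-\delta]}+L_{[t-\delta,t]}$: the middle piece is continuous in the endpoints by conditioning on the bridge positions at times $\delta$ and $t-\delta$ (where the conditioning kernel is continuous), while the two boundary pieces have exponential moments uniformly close to $1$ for small $\delta$. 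That time-splitting is precisely the device that avoids having to control the full local time uniformly at the degenerate endpoint configuration.

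This points to the genuine gap in your proposal: everything hinges on the claim that the estimates of Theorem~\ref{con} ``can be made uniform in the endpoints,'' but that proof exploits coincident endpoints in an essential way --- on $[0,t/2]$ the bridge system is compared, via a \emph{bounded} Radon--Nikodym density, to Dyson Brownian motion started from an entrance law at a single point, and the Tanaka/induction argument is run for that process. For $x,y\in\Lambda_n^\circ$ near $(\hat a,\hat b)$ the bridges are Karlin--McGregor $h$-transforms whose densities involve $p^*_n$, which degenerates as $\Delta(x)\Delta(y)\to 0$, so uniformity over a neighbourhood of the coincident configuration is not a ``rerun'' of Theorem~\ref{con}; it is the heart of the lemma and needs an actual argument (the paper's conditioning-at-$\delta$ trick is one way to supply it). Two further points: the termwise $L^2$-continuity of the kernels, including at $(\hat a,\hat b)$, is likewise asserted rather than proved and is of the same nature as the statement being established; and as written your domination step does not work, since finiteness of $\sup_{K}\sum_k$ does not justify interchanging limit and sum --- you need $\sum_k\sup_K$ finite or uniformly small tails, which would follow from a uniform bound on $\E e^{2L}$ over $K$, i.e.\ again the missing uniform estimate. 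The identification $\Z_n=c_{n,t}\hat\Z_n(t,\hat x,\hat y)$ by matching the limiting kernels with those of (\ref{woe}) is fine once the convergence is in place.
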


\begin{proof}

First we recall that we have the representation
\begin{align}
M_n(t,\x,\y) = \frac{p^*_n(t,\x,\y)}{\Delta(\x)\Delta(\y)}
\Big( 1+\sum_{k=1}^\infty \int_{\Delta_k(t)}& \int_{\R^k}
 R^{(\x,\y)}_k((t_1,x_1),\ldots,(t_k,x_k))\\
&\times \W(dt_1,dx_1)\cdots \W(dt_k,dx_k)\Big) ,\nonumber
\end{align}
where $ R^{(\x,\y)}_k$  are the correlations functions of a collection
of $n$ non-intersecting Brownian bridges starting at $\x$ and ending at time
$t$ at $\y$.
Since
\[
\frac{p^*_n(t,\x,\y)}{\Delta(\x)\Delta(\y)}
\]
extends continuously to $\Lambda_n\times\Lambda_n$ this representation
naturally defines the extension of $M_n(t,\x,\y)$ to
$\Lambda_n\times\Lambda_n$. Our task is show continuity in  $L^2(W)$. For
this it is enough to show that
\[
(\x,\y, \x^\prime, \y^\prime) \mapsto \E\bigl[
M_n(t,\x,\y)M_n(t,\x^\prime,\y^\prime) \bigr]
\]
is continuous.  Now, as in the proof of Theorem \ref{con} this expectation is equal to
\[
\frac{p^*_n(t,\x,\y)p^*_n(t,\x^\prime, \y^\prime)}{\Delta(\x)\Delta(\y)\Delta(\x^\prime)\Delta(\y^\prime)}\E[ e^L]
\]
where $L$ is the total intersection local time of two independent sets of
non-intersecting bridges, ${\mathbf X}$ and ${\mathbf X^\prime}$ say, starting at positions $\x=(x_1, \ldots x_n)$ and $\x^\prime=(x^\prime_1,x^\prime_2, \ldots, x^\prime_n)$ and 
ending at $\y=(y_1,\ldots,y_n)$ and $\y^\prime=(y^\prime_1, y^\prime_2, \ldots, y^\prime_n)$ at time $t$.

Let us write $L= L_{[0,\delta]}+L_{[\delta, t-\delta]}+ L_{[t-\delta,t]}$,
where $L_{[0,\epsilon]}$ denotes the local time accrued over the time
periods $[0,\delta]$ and so on.
By conditioning on the position of the bridges at times $\delta$ and
$t-\delta$ we have
\begin{multline*}
\E\bigl[ \exp( L_{[\delta,t-\delta]}) | ({\mathbf X}(0),{\mathbf X}^\prime(0),{\mathbf X}(t),
{\mathbf X}^\prime(t))=(\x,\x^\prime, \y,\y^\prime)\bigr] = \\
\int p((\x,\x^\prime,\y,\y^\prime), ({\boldsymbol \xi},{\boldsymbol \xi}^\prime, {\boldsymbol \eta}, {\boldsymbol \eta}^\prime)) \times \\  \E\bigl[ \exp( L_{[\delta,t-\delta]}) |
(X(\delta),X^\prime(\delta),X(t-\delta), X^\prime(t-\delta))=({\boldsymbol \xi},{\boldsymbol \xi}^\prime, {\boldsymbol \eta}, {\boldsymbol \eta}^\prime)\bigl]
d{\boldsymbol \xi}d{\boldsymbol \xi}^\prime d {\boldsymbol \eta} d {\boldsymbol \eta}^\prime.
\end{multline*}
where the  kernel $p(\cdot,\cdot)$ can be written as a product of transition
densities for non-intersecting Brownian motions, and is thus seen to be
continuous. From this it follows  by a dominated convergence argument that
$$
\E\bigl[ \exp( L_{[\delta,t-\delta]}) | (X(0),X^\prime(0),X(t),
X^\prime(t))=(\x,\x^\prime, \y,\y^\prime)\bigr]$$ depends continuously on $(\x,\x^\prime, \y,\y^\prime)$ also.

To deduce the continuity of 
$$\z \mapsto \E\bigl[ \exp( L) | (X(0),X^\prime(0),X(t), X^\prime(t))=(\x,\x^\prime, \y,\y^\prime)\bigr]$$ 
we must show that the
difference
\begin{align*}
\E\bigl[ \exp( L) | &(X(0),X^\prime(0),X(t), X^\prime(t))=(\x,\x^\prime, \y,\y^\prime)\bigr]\\
&-\E\bigl[
\exp( L_{[\delta,t-\delta]}) | (X(0),X^\prime(0),X(t),
X^\prime(t))=(\x,\x^\prime, \y,\y^\prime)\bigr]
\end{align*}
can be made uniformly small  for $\z$ within  compact sets  by choosing
$\delta$ small enough. Applying the Cauchy-Schwartz inequality this
amounts to showing that
\[
\E\bigl[ \exp( 4 L_{[0,\delta]}) | (X(0),X^\prime(0),X(t),
X^\prime(t))=(\x,\x^\prime, \y,\y^\prime)\bigr] 
\]
and
\[
\E\bigl[ \exp(4L_{[t-\delta,t]}) |
(X(0),X^\prime(0),X(t), X^\prime(t))=(\x,\x^\prime, \y,\y^\prime\bigr]
\]
can be made uniformly close to $1$. This follows from Proposition \ref{expmoments},  noting that the joint  law of ${\mathbf X}$ and ${\mathbf X}^\prime$ over the time interval $[0,\delta]$ is absolutely continuous to that of a pair of independent Dyson Brownian motions, with a density, specified by equation \eqref{bridge} that, by virtue of Lemma \ref{qn}, is bounded uniformly for $(\x,\x^\prime, \y,\y^\prime)$  belonging to compact sets. 
\end{proof}

For each  $t>0$, the continuity in $L^2(W)$ of the mapping $ (\x,\y) \mapsto M_n(t,\x,\y)$ implies the existence of a version of the stochastic  process $M_n(t, \cdot,\cdot)$ which is measurable, see Cohn \cite{cohn}. Henceforth we will always assume that
we are using this version, and likewise with regard to $\Z_n(t, \cdot, \cdot)$.

Recall that $K_n(t,\z,\y;s)$ is defined  via the chaos expansion (\ref{WOE})
but with the shifted white noise $\dot\W(s+\cdot,\cdot)$, and define 
$M_n(t,\z,\y;s)$ from it via
\[
M_n(t,\x,\y;s)=\frac{K_n(t,\x,\y;s)}{\Delta(\x)\Delta(\y)},
\]
\begin{cor}\label{M-markov}
For each $\x,\y\in\Lambda_n$, 
$$M_n(s+t,\x,\y)=\int_{\Lambda_n} M_n(s,\x,\z) M_n(t,\z,\y;s) \Delta(\z)^2 d\z,$$
almost surely.
\end{cor}
\begin{proof}

First note that, for $\x,\y \in \Lambda_n^\circ$ this is an immediate consequence of   the flow property for $K_n$ given by Corollary~\ref{K-markov}. 

For $\y\in \Lambda_n^\circ$, we extend the result to an  $\x\in \Lambda_n \setminus\Lambda_n^\circ$ by taking   a sequence $\x_n$  of points in $\Lambda_n^\circ$ converging to $\x$. Then, by Lemma \ref{A},
\[
M_n(s+t,\x_n,\y) \rightarrow M_n(s+t,\x,\y),
\]
in $L^2(W)$, and hence also in $L^1(W)$. On the otherhand we have,
\begin{multline*}
\E \ \left| \int_{\Lambda_n} M_n(s,\x_n,\z) M_n(t,\z,\y;s) \Delta(\z)^2 d\z- \right. \\
 \left.  \int_{\Lambda_n} M_n(s,\x,\z) M_n(t,\z,\y;s) \Delta(\z)^2 d\z \right|
\\
\leq \int_{\Lambda_n} \E \bigl  [\left| M_n(s,\x_n,\z)- M_n(s,\x,\z)  \right|\bigr ] \, \E\bigl[ M_n(t,\z,\y;s)\bigr] \Delta(\z)^2 d\z,
\end{multline*}
where we have used the independence of$M_n(s,\x,\z)$ and $M_n(t,\z,\y;s)$ and the  positivity of $M_n(t,\z,\y;s)$ which follows from  Proposition~\ref{kmg-she}. Now Lemma \ref{A} certain implies that the integrand  on the righthandside tends to $0$ for  every $z$. Moreover,
\[
\E\bigl[ M_n(s,\x_n,\z)\bigr]=  \frac{p^*_n(s,\x_n,\z)}{\Delta(\x_n)\Delta(\z)}
\]
is uniformly bounded by Lemma \ref{qn}, as is $\E\bigl[ M_n(s,\x,\z)\bigr]$, and
\[
 \int_{\Lambda_n} \E\bigl[ M_n(t,\z,\y;s)\bigr] \Delta(\z)^2 d\z=  \int_{\Lambda_n} p^*_n(t,\z,\y)\frac{\Delta(\z)}{\Delta(\y)}  d\z=1.
\]
Consequently,  by  Dominated Convegence, the integral on the righthandside of the displayed inequalities converges to $0$ and hence the flow property is proved to extend to $\x$.

To further extend the flow property  to $y\in \Lambda_n \setminus\Lambda_n^\circ$, we take $\y_n \in \Lambda_n^\circ$ converging to $\y$, and apply essentially the same arguments again, making use of the result just proved. 
 
\end{proof}

\begin{thm}\label{B}
For $x\in\R$
and $\y\in\Lambda^\circ_2$,
\begin{equation}\label{if2}
\frac{M_2(t,x\1 ,\y)}{u(t,x,y_1)u(t,x,y_2)}=\frac{1}{y_1-y_2}
\int_{y_2}^{y_1} \frac{M_2(t,x\1 ,z\1 )}{u(t,x,z)^2} dz.
\end{equation}
\end{thm}

\begin{proof}
It is known \cite{bc} that the solution to the stochastic equation
$u(t,x,y)$ admits a version that is almost surely continuous in $t$ and
$y$ and moreover is strictly positive. We assume in the following that
we are using this version. In particular, having fixed $t$, $x$ and
$y_1>y_2$ we let $A_\epsilon(x)$ be the event $\{ u(t,x,z) > \epsilon
\text{ for all } z \in [y_2,y_1+1] \}$. Then as $\epsilon \downarrow 0$ we
have ${\mathbb P}(A_\epsilon(x)) \uparrow 1$.

By Theorem~\ref{kmgw}, for $\x,\y\in\Lambda_2^\circ$,
$$M_2(t,\x,\y)=\frac{1}{\Delta(\x)\Delta(\y)}[u(t,x_1,y_1)u(t,x_2,y_2)-u(t,x_1,y_2)u(t,x_2,y_1)].$$
Hence,
\begin{equation}\label{ad}
\frac{M_2(t,\x,\y)}{u(t,x_2,y_1)u(t,x_2,y_2)}
=\frac{1}{\Delta(\x)\Delta(\y)}\left[\frac{u(t,x_1,y_1)}{u(t,x_2,y_1)}-\frac{u(t,x_1,y_2)}{u(t,x_2,y_2)}\right].
\end{equation}
Writing $\y=(z+h,z)$ where $h>0$ this becomes 
\begin{equation}\label{ad1}
\frac{M_2(t,\x,(z+h,z))}{u(t,x_2,z+h)u(t,x_2,z)}
=\frac{1}{(x_1-x_2)h}\left[\frac{u(t,x_1,z+h)}{u(t,x_2,z+h)}-\frac{u(t,x_1,z)}{u(t,x_2,z)}\right].
\end{equation}
Integrating this equation with respect to $z$ over the interval $[y_2,y_1]$ we obtain
\begin{align*}
\int_{y_2}^{y_1}& \frac{M_2(t,\x,(z+h,z))}{u(t,x_2,z+h)u(t,x_2,z)} dz\\
&= \frac{1}{(x_1-x_2) h}\left[ \int_{y_1}^{y_1+h}
\frac{u(t,x_1,z)}{u(t,x_2,z)} dz - \int_{y_2}^{y_2+h}
\frac{u(t,x_1,z)}{u(t,x_2,z)} dz \right].
\end{align*}
Now let $h$ tend to zero. By the continuity of $u(t,x_1, \cdot)$ and
$u(t,x_2, \cdot)$ the RHS converges almost surely to
$$
\frac{1}{(x_1-x_2)} \left[\frac{u(t,x_1,y_1)}{u(t,x_2,y_1)}-\frac{u(t,x_1,y_2)}{u(t,x_2,y_2)}\right] .
$$
We want to identify the limit of the LHS. Consider
\begin{align*}
E&=\left |
\int_{y_2}^{y_1}\frac{M_2(t,\x,(z+h,z))}{u(t,x_2,z+h)u(t,x_2,z)} dz-
\int_{y_2}^{y_1}\frac{M_2(t,\x,z\1 )}{u(t,x_2,z)^2} dz \right| \\
& \leq \int_{y_2}^{y_1}\frac{|M_2(t,\x,(z+h,z))-M_2(t,\x,z\1)|}{u(t,x_2,z)^2} dz\\ &+ \int_{y_2}^{y_1}\frac{|u(t,x_2,z+h)-
u(t,x_2,z)| M_2(t,\x,(z+h,z))}{u(t,x_2,z)^2 u(t,x_2,z+h)} dz .
\end{align*}
We have
\begin{multline}
\E\bigl[ E ; A_\epsilon(x_2) \bigr] \leq  \epsilon^{-2} \int_{y_2}^{y_1}
\E |M_2(t,\x,(z+h,z))-M_2(t,\x,z\1 )| dz  \\
+ \epsilon^{-3} \int_{y_2}^{y_1} \bigl(\E [M_2(t,\x,(z+h,z))^2]
\E[(u(t,x_2,z+h)-u(t,x_2, z))^2] )^{1/2} \bigr) dz
\end{multline}
By virtue of the  uniform continuity in $L_2$ of the mappings $ (z_1,z_2)
\mapsto M_2(t,\x,(z_1,z_2))$ and $ z \mapsto u(t,x_1,z)$  these
integrals tend to zero as $h\downarrow 0$, and consequently $E$ tends to
$0$ in probability.
Thus we have proven
\begin{equation}\label{firststep}
\frac{1}{(x_1-x_2)} \left
[\frac{u(t,x_1,y_1)}{u(t,x_2,y_1)}-\frac{u(t,x_1,y_2)}{u(t,x_2,y_2)}
\right] =\int_{y_2}^{y_1}\frac{M_2(t,\x, z\1  ))}{u(t,x_2,z)^2} dz .
\end{equation}

Next let $\x=(x+h,x)$ and let $h\downarrow 0$.   The LHS of
(\ref{firststep}) can be rewritten as
$$
(y_1-y_2) \frac{M_2(t,(x+h,x),(y_1,y_2))}{u(t,x,y_1)u(t,x,y_2)};
$$
as $h \downarrow 0$ this converges in probability to
$$
(y_1-y_2) \frac{M_2(t,x\1 ,(y_1,y_2))}{u(t,x,y_1)u(t,x,y_2)}.
$$
On the other hand,  if we consider
$$F=\left|\int_{y_2}^{y_1}\frac{M_2(t,(x+h,x), z\1 
))}{u(t,x,z)^2} dz- \int_{y_2}^{y_1}\frac{M_2(t,x\1 , z\1 
))}{u(t,x,z)^2} dz \right|
$$
we have
$$
\E\bigl[ F ; A_\epsilon(x) \bigr]  \leq \epsilon^{-2} \int_{y_2}^{y_1} \E
|M_2(t,(x+h,x), z\1 )-M_2(t,x\1 ,z\1 )| dz
$$
 which again by the $L_2$ continuity of $M_2$ converges to $0$ as $h
\downarrow 0$. From this it follows the RHS of (\ref{firststep})
converges to
$$
\int_{y_2}^{y_1}\frac{M_2(t,x\1 , z\1  ))}{u(t,x,z)^2} dz,
$$
as required.
\end{proof}

We remark that the identity (\ref{firststep}) shows that the ratio of two solutions to the 
stochastic heat equation is in $H^1$; in fact, such ratios have recently been shown 
(in a slightly different setting) by Hairer~\cite{h} to be in $C^{3/2-\epsilon}$.

\begin{cor}\label{markov2} For each $x\in\R$, the process
$$(\Z_1(t,x,\cdot),\Z_2(t,x,\cdot)),\ t\ge 0$$ has the Markov property.
\end{cor}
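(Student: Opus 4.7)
The plan is to produce a deterministic bijection, at each fixed $t>0$, between the pair $(\Z_1(t,x,\cdot),\Z_2(t,x,\cdot))$ and $(\Z_1(t,x,\cdot),\hat\Z_2(t,\hat x,\cdot))$, and then to transfer the Markov property from the latter to the former.

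First I would verify the bijection. The lemma preceding this corollary gives $\Z_2(t,x,y)=c_{2,t}\hat\Z_2(t,\hat x,\hat y)$, so $\Z_2(t,x,\cdot)$ is, up to the constant $c_{2,t}$, the restriction of $\hat\Z_2(t,\hat x,\cdot)$ to the diagonal. For the reverse map, substituting this identity into the right-hand side of Theorem \ref{if2} yields the explicit deterministic formula
$$\hat\Z_2(t,\hat x,(y_1,y_2))=\frac{\Z_1(t,x,y_1)\Z_1(t,x,y_2)}{c_{2,t}(y_1-y_2)}\int_{y_2}^{y_1}\frac{\Z_2(t,x,z)}{\Z_1(t,x,z)^2}\,dz,$$
valid almost surely for $(y_1,y_2)\in\Lambda_2^\circ$. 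Together these identities exhibit the two processes as images of one another under a time-dependent but deterministic measurable map, so Markovianity of one is equivalent to Markovianity of the other.

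Next I would establish the Markov property of $(\Z_1(t,x,\cdot),\hat\Z_2(t,\hat x,\cdot))$, $t\ge 0$. The starting point is the joint flow property recorded in Section 4: for $w\in\Lambda_2^\circ$,
$$\tilde\Z_2(s+t,w,y)=\int_{\Lambda_2}\tilde\Z_2(s,w,z)\,\tilde\Z_2(t,z,y;s)\,dz,$$
where the second factor depends only on the shifted white noise $\dot\W(s+\cdot,\cdot)$ and is therefore independent of the $\sigma$-algebra $\F_s$ generated by the noise up to time $s$. Dividing through by $\Delta(w)\Delta(y)$ turns this into a flow relation for $\hat\Z_2$; combined with the analogous flow for $\Z=\Z_1$ driven by the same white noise, it yields Markovianity of $(\Z_1(t,x,\cdot),\hat\Z_2(t,w,\cdot))$, $t\ge 0$, for each $w\in\Lambda_2^\circ$. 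The final step is to pass to the limit $w\to\hat x$ using the $L_2$-continuous extension produced by the preceding lemma, so that the flow identity and the Markov property persist for the boundary value $w=\hat x$.

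The technical obstacle I anticipate is precisely this $L_2$ passage to the limit. One must show that the flow identity survives convergence $w\to\hat x$ in $L_2(\W)$, which requires uniform $L_2$ control of $\Delta(z)^2\hat\Z_2(s,w,z)$ on $\Lambda_2$, together with an independence/dominated-convergence argument to pull the limit inside the integral against the future-noise kernel $\hat\Z_2(t,z,y;s)$. Once this is in place, Markovianity transfers along the bijection of the first step to give the corollary.
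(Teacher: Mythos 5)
Your proposal is correct and is essentially the paper's own (implicit) argument: the corollary is obtained there by combining the $L_2$-continuity lemma and the identity (\ref{if2}), which together give the fixed-time deterministic correspondence between $(\Z_1(t,x,\cdot),\Z_2(t,x,\cdot))$ and $\hat\Z_2(t,\hat x,\cdot)$, with the Markov property of the latter coming from the flow property of Section 4. Your extra care in carrying $\Z_1$ along in the pair (sensible, since $\hat\Z_2(t,\hat x,\cdot)$ alone does not obviously determine $\Z_1$) and in spelling out the $L_2$ passage $w\to\hat x$ in the flow identity only fills in steps the paper treats as `not difficult to see'.
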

\begin{proof}
Fix times $0\leq s<t$. Suppose that $F=F(\Z_1(t,x,\cdot),\Z_2(t,x,\cdot))$ is a  bounded  random variable (depending on the random fields at a finite number of points).  We wish to show that the conditional expectation given the white noise $W_{[0,s]}$ of this random variable is measurable with respect to the random fields $( \Z_1(s,x,\cdot),\Z_2(s,x,\cdot)) $.
To see this, note firstly that by Lemma \ref{A}  the same random variable $F$ is a function of the fields  $(M_1(t,x{\bf 1},\cdot),M_2(t,x {\bf 1},\cdot))$. Now the flow property for $M_n$ obtained in  Corollary  \ref{M-markov} together with the independence of $M(t,\cdot,\cdot;s)$ from  $W_{[0,s]}$ implies the conditional expectation $\E \bigl[ F| W_{[0,s]} \bigr]$ has a version $G$ of the form $G= G(M_1(s,x {\bf 1},\cdot),M_2(s,x {\bf 1},\cdot))$. We have $M_1(s,x {\bf 1},\cdot)$ is proportional $\Z_1(t,x,\cdot)$, and more profoundly, by the preceeding Theorem, $M_2(s,x {\bf 1},\cdot))$ can be expressed in terms of  $\Z_1(t,x,\cdot)$ and  $\Z_2(t,x,\cdot)$. Thus we see that $G$ is of the required form.

\end{proof}

It would be interesting to understand the evolution of the multi-layer process 
in terms of a system of stochastic partial differential equations.  
Motivated by the evolution equations obtained in 
Section 2 in the the case of a smooth potential, it is natural to consider and to
try to make sense of the system of equations
\begin{equation}
\partial_t  a_n = \frac12\partial_y^2  a_n + \partial_y[ a_n\partial_y \log \U_n],
\end{equation}
where $\Z_n=\U_1\cdots\U_n$.
For recent progress in this direction, in the case of smooth initial data and periodic boundary
conditions, see~\cite{h}.

\end{document}